\numberwithin{equation}{section}
\numberwithin{figure}{section}
\theoremstyle{plain}
\newtheorem{thm}{\protect\theoremname}[section]
\theoremstyle{definition}
\newtheorem{rem}[thm]{\protect\remarkname}
\theoremstyle{definition}
\theoremstyle{plain}
\newtheorem{prop}[thm]{\protect\propositionname}
\theoremstyle{plain}
\newtheorem{lem}[thm]{\protect\lemmaname}
\theoremstyle{plain}
\theoremstyle{plain} 
\newtheorem{cor}[thm]{\protect\corollaryname}
\theoremstyle{definition}
\theoremstyle{definition}
\theoremstyle{definition}
\theoremstyle{definition}
\theoremstyle{definition}
\DeclareMathOperator{\diam}{diam}
\DeclareMathOperator{\Leb}{Leb}
\DeclareMathOperator{\supp}{supp}
\newcommand{\R}{\mathbb R}
\newcommand{\N}{\mathbb N}
\newcommand{\eps}{\varepsilon}
\newcommand{\mH}{\mathcal{H}}
\newcommand{\udim}{\overline{\dim}_B\,}
\newcommand{\hdim}{\dim_H}
\newcommand{\LIN}{\mathrm{LIN}}
\DeclareMathOperator{\Lip}{Lip}
\DeclareMathOperator{\Per}{Per}
\DeclareMathOperator{\Orb}{Orb}
\DeclareMathOperator{\rank}{rank}
\DeclareMathOperator{\Lin}{Lin}
\DeclareMathOperator{\Gr}{Gr}
\providecommand{\conjecturename}{Conjecture}
\providecommand{\corollaryname}{Corollary}
\providecommand{\definitionname}{Definition}
\providecommand{\examplename}{Example}
\providecommand{\lemmaname}{Lemma}
\providecommand{\problemname}{Problem}
\providecommand{\propositionname}{Proposition}
\providecommand{\remarkname}{Remark}
\providecommand{\theoremname}{Theorem}
\providecommand{\taskname}{Task}
\author[A. \'{S}piewak]{Adam \'{S}piewak}
\address{Institute of Mathematics, Polish Academy of Sciences,	ul.~\'Sniadeckich 8, 00-656 Warszawa, Poland}
\email{ad.spiewak@gmail.com}
\thanks{The author was partially supported by the National Science Centre (Poland) grant 2020/39/B/ST1/02329.}
\subjclass[2020]{37C45, 37C40, 37D25, 58D10}
\keywords{embeddings of dynamical systems, time-delay measurements, time series prediction, Takens embedding theorem, manifold embeddings}
\begin{document}
	
	\title{On the regularity of time-delayed embeddings with self-intersections}

	\maketitle
	
	\begin{abstract}
		We study regularity of the time-delayed coordinate maps \[\phi_{h,k}(x) = (h(x), h(Tx), \ldots, h(T^{k-1}x))\] for a diffeomorphism $T$ of a compact manifold $M$ and smooth observables $h$ on $M$. Takens' embedding theorem shows that if $k > 2\dim M$, then $\phi_{h,k}$ is an embedding for typical $h$. We consider the probabilistic case, where for a given probability measure $\mu$ on $M$ one allows self-intersections in the time-delayed embedding to occur along a zero-measure set. We show that if $k \geq \dim M$ and $k > \hdim(\supp \mu)$, then for a typical observable, $\phi_{h,k}$ is injective on a full-measure set with a pointwise Lipschitz inverse. If moreover $k > \dim M$, then $\phi_{h,k}$ is a local diffeomorphism at almost every point. As an application, we show that if $k > \dim M$, then the Lyapunov exponents of the original system can be approximated with arbitrary precision by almost every orbit in the time-delayed model of the system. We also give almost sure pointwise bounds on the prediction error and provide a non-dynamical analogue of the main result, which can be seen as a probabilistic version of Whitney's embedding theorem.
	\end{abstract}
	
	\section{Introduction and results}
	
	\subsection{Introduction}
	
	This paper contributes to the theory of time-delayed embeddings of dynamical systems. Consider a dynamical system given by a map $T: X \to X$. In many applications the underlying dynamics $(X,T)$ is not directly known, and one has access only to a time series
	\begin{equation}\label{eq: time series} h(x), h(Tx), \ldots, h(T^m x)
	\end{equation}
	generated by an observable $h : X \to \R$ along an orbit of a point $x \in X$. In such a case one is faced with the problem of modeling the unknown dynamics $(X,T)$ based on the observed data. On the most prominent techniques, commonly applied in nonlinear data analysis, is to embed the one-dimensional time series \eqref{eq: time series} into a higher dimensional space $\R^k$ via the \textit{time delayed embedding}, producing a new sequence
	\begin{equation}\label{eq: embedded time series} y_i = (h(T^i x), h(T^{i+1} x), \ldots, h(T^{i+k-1}x)),\ i=0, \ldots, m-k+1.
	\end{equation}
	If the \textit{delay length} k is large enough, one hopes that the original dynamics can be effectively modeled by the \textit{observed dynamics} $y_{i} \mapsto y_{i+1}$. This idea has led to many applications, see e.g. \cite{sm90nonlinear,hgls05distinguishing, SeaClutter, BioclimaticBuildings,  BaghReddy23}.
	
	Starting with a seminal work of Takens \cite{T81}, a mathematical theory has been developed to provide guarantees on the effectiveness of the time-delayed embedding method. Its main object of interest are properties of the \textbf{$\mathbf{k}$-delay coordinate map} $\phi_{h,k}$ corresponding to an observable $h$, defined as
	\[ \phi_{h,k} : X \to \R^k,\ \phi_{h,k}(x) = (h(x), h(Tx), \ldots, h(T^{k-1}x)). \]
	Note that it transforms an orbit $x, Tx, \ldots, T^m x$ of the original system into the embedded time series \eqref{eq: embedded time series} as $y_i = \phi_{h,k}(T^i x)$. The strongest property that one can expect is that $\phi_{h,k}$ is an isomorphism (in a suitable category depending on the structure of $(X,T)$) between $X$ and $\phi_{h,k}(X)\subset \R^k$. In such a case one can define the \textbf{prediction map} $S_{h,k} : \phi_{h,k}(X) \to \phi_{h,k}(X)$ as $S_h = \phi_{h,k} \circ T \circ \phi_{h,k}^{-1}$ and the system $(\phi_{h,k}(X), S_h)$ is isomorphic to the original system $(X,T)$, as the diagram
	\begin{equation}\label{eq:comm_diagram}
		\begin{CD}
			X @>T>> X\\
			@VV\phi_{h,k} V @VV \phi_{h,k} V\\
			\phi_{h,k}(X) @>S_{h,k}>> \phi_{h,k}(X)
		\end{CD}
	\end{equation}
	commutes. Whenever this happens, one can indeed claim that the observed dynamics $S_h$ gives a reliable model of the original dynamics (note that $S_h(y_i) = y_{i+1}$ for $y_i$ as in \eqref{eq: embedded time series}). Takens \cite{T81} proved that if $M$ is a $C^2$ manifold, then for a (Baire) generic pair $(T,h)$ of a $C^2$ diffeomorphism $T:M \to M$ and a $C^2$ observable $h : M \to \R$, the $k$-delay coordinate map $\phi_{h,k}$ is an embedding of $M$ into $\R^k$, provided that $k > 2\dim M$ (the same is true in the $C^r$ category, $1 \leq r \leq \infty$ \cite{Noakes91}). Later, Sauer, Yorke and Casdagli \cite{SYC91} extended this result to arbitrary compact sets $X \subset \R^N$, by proving that for an injective Lipschitz map $T : X \to X$, a prevalent Lipschitz observable $h : X \to \R$ gives rise to an injective $k$-delay coordinate map $\phi_{h,k}$, provided that $k > 2\udim X$ and $2\udim\left(\Per_p(T) \right)< p$ for $p=1, \ldots, k-1$ (here $\udim$ denotes the upper box-counting dimension and $\Per_p (T)$ is the set of $p$-periodic points).\footnote{in fact, \cite{SYC91} considers $C^1$ diffeomorphisms of an open neighbourhood $U$ of $X$ and $C^1$ observables on $U$. For the presented formulation see \cite[Theorem 4.5]{Rob11}.}	This formulation allows for applications to geometrically complicated fractal sets, e.g. attractors of chaotic dynamical systems. These seminal results are commonly invoked as a mathematical justification for the validity of time-delayed based techniques of modeling dynamics from observed time series, e.g. \cite{SugiharaFishes,hgls05distinguishing,sm90nonlinear}.
	
	This paper deals with a \textit{probabilistic} approach to the Takens theory, where one studies the behaviour of the time-delayed embedding for almost every orbit of the system (rather than on the full space) with respect to a given probability distribution $\mu$. In this setting one can allow for an existence of self-intersections in the embedding (i.e. lack of injectivity of $\phi_{h,k}$) or failure of the prediction (i.e. non-commutativity of \eqref{eq:comm_diagram}), as long as these occur with negligible probability. In this case a smaller delay length $k$ may be sufficient. It was conjectured by Schroer, Sauer, Ott and Yorke \cite{SSOY98} that a delay length $k > \dim \mu$ (as opposed to $k > 2\dim X$) should be typically sufficient for a statistically reliable prediction of the system (we give more details in Section \ref{subsec: pred errors}) This or similar approach was repeated afterwards \cite{Liu10, McSharrySmith04, OrtegaLouis98} and recently studied mathematically, see e.g. \cite{MezicBanaszuk04, BGS20, BGSPredict, KoltaiKunde, BGOMY24}. A basic version of a probabilistic Takens time-delayed embedding theorem was provided by Bara\'nski, Gutman and the author \cite[Theorem 1.2]{BGS20}. It states that if $T:X\to X$ is an injective Lipschitz map on a Borel set $X \subset \R^N$ and $\mu$ is a Borel probability measure on $X$, then for a prevalent Lipschitz observable $h : X \to \R$ there exists a full $\mu$-measure Borel set $X_h \subset X$ such that $\phi_{h,k}$ is injective on $X_h$, provided that $k > \hdim \mu$ and $\mu \left( \bigcup \limits_{p=1}^{k-1} \Per_p(T)\right) = 0$, where $\hdim$ denotes the Hausdorff dimension. The threshold $k > \hdim \mu$ is essentially precise for this problem \cite{BGSLimits}. If $\mu$ is additionally $T$-invariant, then $X_h$ can be taken to satisfy $T(X_h) \subset X_h$. In this case one can define the prediction map
	\begin{equation}\label{eq: prediction map}
	S_{h,k} : \phi_{h,k}(X_h) \to \phi_{h,k}(X_h),\ S_{h,k} = \phi_{h,k} \circ T \circ \phi_{h,k}^{-1},
	\end{equation}
	where $\phi_{h,k}^{-1} : \phi_{h,k}(X_h) \to X_h$ denotes the almost surely defined inverse to $\phi_{h,k}$. Note that in this case the diagram \eqref{eq:comm_diagram} commutes almost surely, hence the observed system $(\phi_{h,k}(X_h), S_{h,k})$ gives a $1-1$ time-delayed model of the dynamics $(X,T)$ on a full $\mu$-measure set. The goal of this note is to study regularity of the inverse $\phi_{h,k}^{-1}$ on $\phi_{h,k}(X_h)$, and in consequence also regularity of $S_{h,k}$. This problem is essential for the quantitative study of time-delayed embeddings with self-intersections. In general, assumption $\hdim \mu < k$ is not sufficient to guarantee that $\phi_{h,k}^{-1}$ is continuous \cite[Corollary 8.2]{BGSPredict} (note that in general we cannot expect $X_h$ to be compact), but $X_h$ can be chosen so that $S_{h,k}$ is continuous on $\phi_{h,k}(X_h)$ if $\supp \mu$ is compact with $\hdim(\supp \mu) < k$ \cite[Theorem 1.21]{BGSPredict}. For many applications continuity is insufficient, as it gives no quantitative control on how measurement errors propagate in the reconstruction and prediction procedures. Moreover, important geometric characteristics of the dynamics, such as Lyapunov exponents, are not guaranteed to be preserved in this case. It turns out that assumption $\hdim(\supp \mu) < k$ is insufficient  for H\"older continuity of $\phi_{h,k}^{-1}$ for any $\beta>0$ \cite[Remark 3.10]{BGS20} or even its pointwise H\"older continuity \cite[Proposition 6.2]{BGSRegularity}, hence we have to consider stronger assumptions on $X$ for improved regularity. We bridge the original Takens' setting of diffeomorphisms of smooth manifolds with the recent probabilistic approach and prove in Theorem \ref{thm: takens main} that if $X$ is a compact subset of a smooth manifold $M$ with $k>\hdim X$ and $k \geq \dim M$, then $\phi_{h,k}$ is pointwise bi-Lipschitz at $\mu$-a.e. $x \in X$ for a prevalent smooth observable. If we take $X=M$, then $\phi_{h,k}$ is additionally a local diffeomorphism at $\mu$-a.e. $x \in X$. The former result allows for an almost sure control of prediction errors as defined in \cite{SSOY98} (see Theorem \ref{thm: prediction error}), while the latter guarantees that the Lyapunov exponents of invariant measures can be approximated up tp an arbitrary precision with the use of the embedded dynamics (Theorem \ref{thm: lyap exp}). We also provide a non-dynamical version of the main result (Theorem \ref{thm: projections main}), which can be seen as a probabilistic counterpart of the celebrated Whitney embedding theorem \cite{Whitney36}. A special case gives this result for almost every orthogonal projection onto $k$-dimensional linear subspaces in $\R^N$, providing a probabilistic embedding theorem for manifolds in the setting of the seminal Marstrand embedding theorem \cite{Marstrand,Mattila-proj} from geometric measure theory, see Theorem \ref{thm: projections orth}. Regularity of random linear projections plays an important role in the theory of analog compression \cite{WV10, LosslessAnalogCompression}.
	
	\subsection{Results}
	We will consider dynamics on a finite-dimensional, compact, Riemannian $C^r$  manifold $M$, allowing any $r \in \{1, 2, \ldots, \infty \}$. We denote by $\rho : M \times M \to [0,\infty)$ the metric on $M$ induced by the Riemannian metric tensor and consider the Hausdorff dimension $\hdim$ of subsets of $M$ with respect to $\rho$. We employ prevalence as the notion of typicality in the space of $C^r$ functions on $M$. See Section \ref{sec: prelim} for details. For a map $T: M \to M$ we denote by $\Per_p (T) = \{ x \in M : T^p x = x\}$ the set of $p$-periodic points.
	
	\begin{thm}\label{thm: takens main}
	Let $M$ be a compact, Riemannian $C^r$ manifold. Let $T : M \to M$ be a $C^r$ diffeomorphism. Let $X \subset M$ be a compact set and $\mu$ be a Borel probability measure on $X$. Fix $k \in \N$ and assume that $\mu \left( \bigcup \limits_{p=1}^{k-1} \Per_p(T) \right) = 0$. The following hold for a prevalent $C^r$ observable $h : M \to \R$
	\begin{enumerate}
	\item\label{it: takens main hdim} if $k > \hdim X$ and $k \geq \dim M$, then for $\mu$-a.e. $x \in X$ there exists $C = C(x, h) \in [0, \infty)$ so that
	\begin{equation}\label{eq: takens bilip main} \rho(x,y) \leq C \|\phi_{h,k}(x) - \phi_{h,k}(y)\| \text{ holds for every } y \in X,
	\end{equation}
	\item\label{it: takens main dim} if $k > \dim M$, then for $\mu$-a.e. $x \in X$ there exist open neighbourhoods $U$ of $x$ in $M$ and $V$ of $\phi_{h,k}(x)$ in $\R^k$ such that $\phi_{h,k}$ is a $C^r$ diffeomorphism between $U$ and $V \cap \phi_{h,k}(M)$.
	\end{enumerate}
	\end{thm}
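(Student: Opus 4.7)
My plan combines a transversality argument with a polynomial probe family, exploiting the two hypotheses $k \geq \dim M$ and $k > \hdim X$ in complementary ways. After a Whitney embedding $M \hookrightarrow \R^N$, I fix the probe family $h_\alpha = h + \sum_{i=1}^n \alpha_i p_i$, $\alpha \in \R^n$, where the $p_i$ are polynomials on $\R^N$ (restricted to $M$), chosen rich enough that for every pair $x \neq y$ in $M$ whose $k$-orbit blocks $\{T^jx\}_{j=0}^{k-1}$ and $\{T^jy\}_{j=0}^{k-1}$ are disjoint, the matrix $A_{x,y} = \bigl(p_i(T^j x) - p_i(T^j y)\bigr)_{i=1,j=0}^{n,k-1}$ has rank $k$. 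Endowing $\R^n$ with a compactly supported, Lebesgue-absolutely continuous probability $\nu$ supplies the probe measure for prevalence in $C^r(M,\R)$.

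For the local regime of Part \eqref{it: takens main hdim}, a parametric Sard / transversality argument shows that for $\nu$-a.e.\ $\alpha$, the map $\phi_{h_\alpha,k}$ is a $C^r$ immersion at $\mu$-a.e.\ $x \in M$; the hypothesis $k \geq \dim M$ is essential so that maximal rank $\dim M$ can be attained in $\R^k$. At any immersion point $x$, the inverse function theorem provides a neighbourhood $U_x \ni x$ and a constant $C_1(x,h_\alpha) < \infty$ with $\rho(x,y) \leq C_1\|\phi_{h_\alpha,k}(x)-\phi_{h_\alpha,k}(y)\|$ for all $y \in U_x \cap M$. This handles the case when $y$ is close to $x$.

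The global regime is the main technical content and reduces to the \emph{strong injectivity} statement that for $\nu$-a.e.\ $\alpha$ and $\mu$-a.e.\ $x \in X$, $\phi_{h_\alpha,k}^{-1}(\phi_{h_\alpha,k}(x)) \cap X = \{x\}$. Fix $x$ outside the excluded periodic set. For each $y \in X$ whose $k$-orbit block is disjoint from that of $x$, the event $\{\alpha : \phi_{h_\alpha,k}(y) = \phi_{h_\alpha,k}(x)\}$ is an affine subspace $H_y \subset \R^n$ of codimension $k$. Away from the singular locus where $\sigma_{\min}(A_{x,y}) = 0$, a Lipschitz choice of particular solution $\alpha_y^*$ and basis $B(y)$ of $\ker A_{x,y}$ makes $(y,\gamma) \mapsto \alpha_y^* + B(y)\gamma$ a Lipschitz parametrization of $\bigcup_y H_y$ by (a subset of) $X \times \R^{n-k}$, giving
\[
\hdim\Bigl(\,\bigcup_y H_y\,\Bigr) \leq \hdim X + (n-k) < n,
\]
the strict inequality being exactly $\hdim X < k$. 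Therefore $\nu(\bigcup_y H_y) = 0$. The countably many $y \in X$ with overlapping orbit blocks (including $y$ in the $T$-orbit of $x$) contribute individually $\nu$-null proper affine subspaces, the properness being ensured by $x \notin \bigcup_{p<k}\Per_p(T)$. Integrating in $x$ and applying Fubini then yields strong injectivity $\mu$-a.e.\ for $\nu$-a.e.\ $\alpha$; compactness of $X$ and continuity of $\phi_{h_\alpha,k}$ upgrade this to $\inf\{\|\phi_{h_\alpha,k}(x)-\phi_{h_\alpha,k}(y)\| : y \in X,\ \rho(x,y) \geq r\} > 0$ for every $r > 0$, which combined with the local bi-Lipschitz bound completes Part \eqref{it: takens main hdim}.

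Part \eqref{it: takens main dim} follows by applying Part \eqref{it: takens main hdim} with $X = M$ (so $\hdim X = \dim M < k$): both immersion and pointwise bi-Lipschitz inverse hold at $\mu$-a.e.\ $x \in M$, so the rank theorem exhibits $\phi_{h,k}|_U$ as a $C^r$ diffeomorphism onto a submanifold germ, and Part \eqref{it: takens main hdim}'s separation lets me shrink $V \ni \phi_{h,k}(x)$ to ensure $\phi_{h,k}^{-1}(V) = U$, delivering the diffeomorphism $\phi_{h,k}: U \to V \cap \phi_{h,k}(M)$. I expect the main obstacle to be the construction of the polynomial probe family achieving the uniform rank condition together with the Lipschitz control on $y \mapsto (\alpha_y^*, B(y))$ needed for the dimension bound on $\bigcup_y H_y$; the periodicity hypothesis $\mu(\bigcup_{p<k}\Per_p(T)) = 0$ enters precisely to excise the orbit-overlap locus where this control degenerates.
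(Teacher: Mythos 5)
Your proposal recovers the paper's architecture exactly: after a Whitney embedding, fix $x$, establish (a) that for a.e.\ $\alpha$ the map $\phi_{h_\alpha,k}$ is an immersion at $x$ (hence locally bi-Lipschitz there), and (b) that for a.e.\ $\alpha$ no other $y\in X$ collides with $x$ under $\phi_{h_\alpha,k}$ (global injectivity, upgraded by compactness of $X$ and continuity to a positive separation $\inf\{\|\phi_{h_\alpha,k}(x)-\phi_{h_\alpha,k}(y)\|:\rho(x,y)\ge r\}>0$), then combine (a) and (b) via Fubini; part \eqref{it: takens main dim} is obtained by applying the separation with $X=M$, exactly as in the paper. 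The genuine difference lies in the key lemma for (b). You propose to bound $\hdim\bigl(\bigcup_y H_y\bigr)\le\hdim X+(n-k)<n$ via a Lipschitz parametrization of the union of bad affine subspaces, whereas the paper covers $F_n=\{y:\sigma_k(D_{x,y})\ge 1/n\}$ by balls of radii $\eps_i$ with $\sum\eps_i^k$ arbitrarily small (possible since $\mathcal{H}^k(X)=0$) and sums the Lebesgue measures of the slabs $\{\alpha:\|D_{x,y_i}\alpha+w_{x,y_i}\|\le Q\eps_i\}$ using the singular-value estimate \cite[Lemma 14.3]{Rob11}. The paper's route sidesteps the delicate construction you flag: a Lipschitz choice of particular solution $\alpha_y^*$ and of a basis $B(y)$ of $\ker D_{x,y}$ exists only after stratifying by singular value (your ``away from the singular locus'') and locally trivializing the kernel bundle, and one must then take a countable union of Lipschitz images of bounded pieces of $X\times\R^{n-k}$ and invoke the product formula $\hdim(A\times B)\le\hdim A+\udim B$; in exchange, your dimension bound is conceptually cleaner. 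For (a), where you gesture at ``parametric Sard/transversality'', the paper gives an explicit argument: a polynomial interpolation lemma applied at the distinct points $x,Tx,\dots,T^{k-1}x$ shows that $\alpha\mapsto D_x\phi_\alpha$ surjects onto $\LIN(T_xM,\R^k)$, so the full-rank set $\{L:\rank(D_x\phi_h+L)=\dim M\}$ is hit for a.e.\ $\alpha$ by Lemma \ref{lem: full rank}. Two small misattributions worth correcting: the periodicity hypothesis on $\mu$ enters in step (a) as well (so that $x,\dots,T^{k-1}x$ are distinct and interpolation is possible), not only to control orbit overlaps as you state; and the properness of the countably many exceptional affine subspaces $H_y$ for $y\in\Orb(x)\setminus\{x\}$ needs only $x\ne y$ (giving $\rank D_{x,y}\ge 1$), while the periodicity hypothesis on $x$ is what guarantees $\rank D_{x,y}=k$ for $y\notin\Orb(x)$.
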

	Part of the last statement is that $V \cap \phi_{h,k}(M)$ is an embedded $C^r$ submanifold in $\R^k$ (note that the full image $\phi_{h,k}(M)$ is not guaranteed to be a submanifold in $\R^k$). It also follows from the statement that $\phi_{h,k}^{-1}(V) = U$ and so there are no points of (global) self-intersection of $\phi_{h,k}(M)$ in $V$. These remarks apply to other similar statements throughout the paper.

	In a non-dynamical context we can prove the following "probabilistic" version of Whitney's embedding theorem.
	
	\begin{thm}\label{thm: projections main}
	Let $M$ be a compact, Riemannian $C^r$ manifold. Let $X \subset M$ be a compact set and let $\mu$ be a Borel probability measure on $X$. Fix $k \in \N$. The following hold for a prevalent $C^r$ function $\phi : M \to \R^k$:
	\begin{enumerate}
		\item if $k > \hdim X$ and $k \geq \dim M$, then for $\mu$-a.e. $x \in X$ there exists $C = C(x, \phi) \in [0, \infty)$ so that
		\begin{equation}\label{eq: proj bilip main} \rho(x,y) \leq C \|\phi(x) - \phi(y)\| \text{ holds for every } y \in X,
		\end{equation}
		\item if $k > \dim M$, then for $\mu$-a.e. $x \in X$ there exist open neighbourhoods $U$ of $x$ in $M$ and $V$ of $\phi(x)$ in $\R^k$ such that $\phi$ is a $C^r$ diffeomorphism between $U$ and $V \cap \phi(M)$.
	\end{enumerate}
	\end{thm}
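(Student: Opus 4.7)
I would prove Theorem \ref{thm: projections main} directly by a prevalence/transversality argument on $C^r(M,\R^k)$, viewing $\phi=(h_1,\ldots,h_k)$ as a $k$-tuple of independent real-valued $C^r$ functions on $M$. This is conceptually simpler than the dynamical setting, where the $k$ coordinates of $\phi_{h,k}$ are coupled through iterates of $T$. Everything reduces to two assertions for a prevalent $\phi$: (I) the differential $d\phi_x$ has full rank $\dim M$ at $\mu$-a.e.\ $x$, and (II) for $\mu$-a.e.\ $x$ the fiber $\phi^{-1}(\phi(x))$ meets $X$ (for part (1)) or all of $M$ (for part (2)) only at $x$.

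\textbf{The two assertions.} For (I), I would fix $x\in M$, work in a chart around $x$, and use a finite-dimensional probe in $C^r(M,\R^k)$ built from bump functions near $x$ times local coordinate functions; the induced map from the probe into the space of $k\times \dim M$ matrices is surjective, so the set of probe parameters making $d\phi_x$ rank-deficient is a proper algebraic subvariety (using $k\geq \dim M$), hence Lebesgue-null. Fubini with respect to $\mu\otimes P$ then yields (I) for prevalent $\phi$. For (II), the argument mirrors the non-dynamical core of \cite[Theorem~1.2]{BGS20}: no periodic-point condition enters, since that hypothesis is only needed when injectivity is driven by a single observable composed with $T$. One covers the set of colliding pairs by diagonal slabs $\{\rho(x,y)\sim 2^{-j}\}$, writes the measure of the colliding set as an integral in the probe, and uses a Frostman/energy-type estimate; the hypothesis $k>\hdim X$ (for part (1)) or $k>\dim M\geq \hdim M$ (for part (2)) provides summability across scales, forcing the measure of $\{(x,y)\in X\times X: x\neq y,\ \phi(x)=\phi(y)\}$ (resp.\ its $M$-version) to vanish.

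\textbf{Combining, and the main obstacle.} At $\mu$-a.e.\ $x$, (I) together with the inverse function theorem provides a neighborhood $U_x\ni x$ and a constant $c_x>0$ with $\rho(x,y)\leq c_x\|\phi(x)-\phi(y)\|$ for $y\in U_x$. For part (1), compactness of $X\setminus U_x$ and continuity of $\phi$, together with (II), yield $\delta_x:=\inf\{\|\phi(x)-\phi(y)\|: y\in X\setminus U_x\}>0$, and then $C=\max(c_x,\diam(X)/\delta_x)$ verifies \eqref{eq: proj bilip main}. For part (2), applying (II) to $M$ in place of $X$ and using compactness of $M\setminus U_x$ gives a neighborhood $V$ of $\phi(x)$ with $\phi^{-1}(V)\subset U_x$; then $\phi$ restricts to a $C^r$ diffeomorphism of $U:=\phi^{-1}(V)$ onto $V\cap\phi(M)$, which is thereby a $C^r$ submanifold. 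The main obstacle, I expect, is assertion (II): the potential-theoretic covering estimate that delivers the sharp dimension thresholds $k>\hdim X$ and $k>\dim M$ is the technical heart of the probabilistic embedding theory, whereas once (II) is available, (I) and the final combination are routine transversality plus compactness.
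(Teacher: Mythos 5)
Your proposal has the same overall architecture as the paper's proof of Theorem \ref{thm: projections main} (via Theorem \ref{thm: projections detail}): split the claim into a local almost-sure immersion statement (your (I), the paper's Fact~2) and a global almost-sure injectivity/continuity statement (your (II), the paper's Fact~1), and then combine them at a fixed generic $x$ by a Fubini-plus-compactness argument that stitches the bi-Lipschitz constant together from the local diffeomorphism neighbourhood $U_x$ and the positive separation $\delta_x$ on the compact complement $X\setminus U_x$ (resp.\ $M\setminus U_x$). Your combination step is essentially identical to the paper's, including the observation that pointwise injectivity together with compactness of $X$ upgrades to the quantitative continuity used in Fact~1.

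The only substantive difference is the probe measure realizing prevalence. The paper first embeds $M$ into $\R^N$ by Whitney's theorem (Remark \ref{rem: embed}) and then perturbs linearly, $\phi_L = \phi + L$ with $L\in\LIN(\R^N,\R^k)$; this is a single, $x$-independent finite-dimensional probe for which (I) reduces to Lemma \ref{lem: full rank}, and for which the transversality needed in (II) is uniform because $L\mapsto L(x-y)$ is surjective onto $\R^k$ for every pair $x\neq y$ in $\R^N$. You instead propose an intrinsic probe made of bump functions supported near $x$ times local coordinates. Be careful here: in a prevalence argument the probe must be fixed before applying Fubini, so it cannot depend on $x$. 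This is repairable (take a finite atlas of $M$ and include bumps subordinate to every chart), but then for assertion (II) one also needs to verify, for each pair $(x,y)$ with $x\neq y$, that the map from probe parameters to $\phi_\alpha(x)-\phi_\alpha(y)$ has a uniformly controlled smallest singular value, as in Lemma \ref{lem: key_ineq_inter}; this is where the chart-based probe gets fiddly (e.g.\ when both $x$ and $y$ lie in the support of the same bump, or when neither does). The Whitney-embedding-plus-linear-perturbation route sidesteps these case distinctions entirely, which is why the paper uses it, and why the paper can simply cite \cite[Theorem 4.2]{BGSRegularity} for Fact~1 rather than re-deriving the covering estimate you sketch.
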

	
	In both theorems above it suffices to assume $\mH^k(X) = 0$ instead of $\hdim X < k$, where $\mH^k$ denotes the $k$-dimensional Hausdorff measure on $X$. Moreover, prevalence in Theorem \ref{thm: takens main} is obtained by random polynomial perturbations of $C^r$ observables $h : M \to \R$, while in Theorem \ref{thm: projections main} by random linear perturbation of $C^k$ maps $\phi : M \to \R^k$ (in both cases, after embedding $M$ into $\R^N$). See Theorems \ref{thm: projections detail} and \ref{thm: takens detail} for more detailed statements.
	
	\begin{rem}
	In both Theorems \ref{thm: takens main} and \ref{thm: projections main}, the Riemannian structure is not required for the second point. In the first point, the metric structure is needed for the Hausdorff dimension and the bi-Lipschitz condition to be well defined, but the particular choice of the Riemannian structure does not matter (except for the value of the constant $C(x,\phi)$), as all Riemannian metrics on a compact manifold are bi-Lipschitz equivalent. Since every compact $C^r$ manifold admits a Riemannian structure, this assumption puts no additional restrictions.
	\end{rem}
	
	\section{Preliminaries}\label{sec: prelim}
	
	Below we recall some basics on geometric measure theory, smooth manifolds and prevalence which will be useful throughout the paper.
	
	We shall endow $\R^m$ with the Euclidean norm $\| \cdot \|$ and $B_m(x,r)$ will denote the open $r$-ball in this metric. Given linear spaces $V$ and $W$ we denote by $\LIN(V,W)$ the linear space of linear operators $L : V \to W$. If $V$ and $W$ are finite dimensional, then so is $\LIN(V,W)$ and hence we can consider the Lebesgue measure on it. While it has no canonical normalization, we shall only need the classes of zero and full measure sets, which are determined uniquely. The following basic lemma will be useful for us.
	
	\begin{lem}\label{lem: full rank}
	Let $V, W$ be finite-dimensional linear spaces. Assume that $\dim V \leq \dim W$ and fix $L_0 \in \LIN(V,W)$. Then $\rank(L_0 + L) = \dim V$ for Lebesgue almost every $L \in \LIN(V,W)$.
	\end{lem}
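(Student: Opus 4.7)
The strategy is to reduce to a standard fact: the zero set of a non-trivial polynomial in $\R^N$ has Lebesgue measure zero. Once that is in hand, the lemma becomes essentially a computation with minors.

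First I would use translation invariance of the class of Lebesgue null sets on $\LIN(V,W)$. Setting $L' = L_0 + L$, the set
\[ \{ L \in \LIN(V,W) : \rank(L_0 + L) < \dim V \} \]
is the translate (by $-L_0$) of the set $E = \{ L' \in \LIN(V,W) : \rank L' < \dim V \}$. Hence it suffices to show that $E$ has Lebesgue measure zero.

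Next I would reduce $E$ to an algebraic condition. Let $n = \dim V$ and $m = \dim W$, so $n \leq m$. Choosing bases of $V$ and $W$ identifies $\LIN(V,W)$ with the space $M_{m \times n}(\R) \cong \R^{mn}$ of $m \times n$ real matrices. Under this identification, the condition $\rank L' < n$ is equivalent to the simultaneous vanishing of all $\binom{m}{n}$ minors of size $n \times n$ of the matrix representing $L'$. Each such minor is a polynomial of degree $n$ in the entries of the matrix. Since $m \geq n$, at least one of these minors is a non-identically-zero polynomial; for instance, the top $n \times n$ minor evaluates to $1$ on the matrix whose top block is the identity $I_n$ and whose bottom block is zero. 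Thus $E$ is contained in the zero locus of a non-trivial polynomial $P : \R^{mn} \to \R$.

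Finally, I would invoke the standard fact that the zero set of a non-trivial polynomial in $\R^N$ has Lebesgue measure zero, proved by induction on $N$ using Fubini: for $N = 1$ a non-zero polynomial has only finitely many roots, and for the inductive step one writes $P$ as a polynomial in one variable with coefficients polynomial in the others and applies Fubini together with the inductive hypothesis. Applying this to our $P$ concludes the proof.

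There is no substantive obstacle here; the only point requiring mild care is the justification that some $n \times n$ minor is not identically zero, which uses $n \leq m$ in an essential way.
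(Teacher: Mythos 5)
Your proof is correct and takes essentially the same route as the paper: reduce to the non-vanishing of some $\dim V \times \dim V$ minor, and use that the zero set of a non-trivial polynomial is Lebesgue-null. The paper's version is considerably terser (it states the minor criterion and asserts the almost-everywhere conclusion without spelling out the translation reduction or the polynomial-zero-set fact), but the substance is identical.
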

	
	\begin{proof}
	For $\rank(L_0 + L)=\dim V$ it suffices that there exists a $\dim V \times \dim V$ minor of the matrix of $L_0 + L$ (in any fixed linear bases of $V$ and $W$) with non-zero determinant. This holds for Lebesgue almost every $L \in \LIN(V,W)$.
	\end{proof}
	
	\subsection{Hausdorff measure and dimension}
	
	Given a metric space $(X, \rho)$, the \textbf{$\mathbf{s}$-dimensional (outer) Hausdorff measure} $\mH^s(A)$ of a set $A \subset X$ is defined as
	\[ \mH^s(A) = \lim_{\delta \to 0}\ \inf \Big\{ \sum_{i = 1}^{\infty} \diam(U_i)^s : X \subset \bigcup_{i=1}^{\infty} U_i,\ \diam(U_i) \leq \delta  \Big\}\]
	and the \textbf{Hausdorff dimension} of $A$ is
	\[ \hdim(A) = \inf \{ s>0 : \mH^s(A) = 0 \} = \sup \{ s>0 : \mH^s(A) = +\infty \}. \]
	See e.g. \cite{Rob11} for a more detailed introduction to Hausdorff dimension.
	
	\subsection{Smooth manifolds}\label{subsec: manifolds} As a general source on smooth manifolds we refer to \cite{LeeIntroSmoothManifolds}. We also refer the reader to \cite{HirschDiffTop}, which treats the $C^r$ case for $r=1, 2, \ldots, \infty$, and to \cite{N73, SpivakDiffGeom} as additional sources.
	
	We consider only finite dimensional $C^r$ manifolds with $r=1, 2, \ldots, \infty$ and assume them to be Hausdorff. For $x \in M$ we denote by $T_x M$ the tangent space to $M$ at $x$. For a differentiable map $f : M \to N$ between $C^r$ manifolds and $x \in M$, we shall denote by $D_x f : T_x M \to T_{f(x)}N$ the differential of $f$ at $x$. Recall that $T_x M$ is a $\dim M$-dimensional vector space and $D_x f$ is a linear map. For $M = \R^N$ we shall identify $T_x \R^N = \R^N$ via the canonical basis. A differentiable map $f : M \to N$ is called an \textbf{immersion at a point} $x \in M$ if $D_x f$ is injective, and $f$ is called an \textbf{immersion} if it is an immersion at every point of $M$. Moreover, a $C^r$ map $f : M \to N$ is called a \textbf{$\mathbf{C^r}$ diffeomorphism} if it is a homeomorphism whose inverse if also $C^r$, and it is called a \textbf{$\mathbf{C^r}$ embedding} if it is an immersion and a homeomorphism onto its image. We will repeatedly use the following basic fact.
	
	\begin{lem}\label{lem: immersion}
	Let $f : M \to N$ be a $C^r$ map between $C^r$ manifolds. If $f$ is an immersion at $x \in X$ then there exists an open neighbourhood $U$ of $x$ such that $f(U)$ is a $\dim(M)$-dimensional embedded $C^r$ submanifold in $N$ and $f|_U : U \to f(U)$ is a $C^r$ diffeomorphism.
	\end{lem}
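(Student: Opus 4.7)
The plan is to reduce the statement to a local, Euclidean-space claim via charts and then invoke the inverse function theorem in the standard way that proves the local immersion theorem (also called the constant-rank / canonical form theorem for immersions).

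First I would choose $C^r$ charts $(U_0,\varphi)$ around $x$ with $\varphi(x)=0$ and $(V_0,\psi)$ around $f(x)$ with $\psi(f(x))=0$, shrinking $U_0$ so that $f(U_0)\subset V_0$. The conjugated map $\tilde f := \psi\circ f\circ \varphi^{-1}\colon \varphi(U_0)\to \psi(V_0)\subset \R^n$ (where $m=\dim M$, $n=\dim N$) is $C^r$ and has injective differential at $0$, because the hypothesis that $f$ is an immersion at $x$ is coordinate-invariant. After a linear change of basis in the target, I may assume the top $m\times m$ block of the Jacobian $D_0\tilde f$ is invertible.

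Next I would introduce the auxiliary map
\[
F\colon \varphi(U_0)\times \R^{n-m}\longrightarrow \R^n,\qquad F(u,v)=\tilde f(u)+(0,v),
\]
whose differential at $(0,0)$ is the block matrix $\bigl[\begin{smallmatrix} A & 0\\ B & I_{n-m}\end{smallmatrix}\bigr]$ with $A$ the top block of $D_0\tilde f$, hence invertible. By the inverse function theorem (which is available in the $C^r$ category for every $r\geq 1$, cf.\ \cite{HirschDiffTop}), $F$ is a $C^r$ diffeomorphism between open neighbourhoods of $(0,0)$ and $0$. Composing $\psi$ with $F^{-1}$ gives a new $C^r$ chart $\tilde\psi$ on a neighbourhood $V$ of $f(x)$ in which $f$ takes the canonical form $(u_1,\dots,u_m)\mapsto (u_1,\dots,u_m,0,\dots,0)$ on a suitable neighbourhood $U\subset U_0$ of $x$.

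From this canonical form all conclusions are immediate: $\tilde\psi(f(U)\cap V)=\tilde\psi(V)\cap (\R^m\times\{0\})$, which exhibits $f(U)$ locally as a $C^r$ slice chart, hence as an embedded $C^r$ submanifold of dimension $m$ in $N$; and $f|_U\colon U\to f(U)$ is in these coordinates just the identity $u\mapsto u$, so it is a $C^r$ diffeomorphism onto its image. The only subtle point—really the sole ``obstacle''—is ensuring after shrinking that the preimage of the slice equals $U$ (so that $f|_U$ is injective), which is handled automatically because $F^{-1}\circ\tilde f$ is the inclusion into the first $m$ coordinates on a small enough neighbourhood of $0$. Everything else is bookkeeping with charts.
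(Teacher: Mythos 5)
Your argument is correct and is precisely the standard proof of the local immersion theorem (canonical form for immersions) via the inverse function theorem, which is exactly what the paper appeals to: the paper does not spell out a proof but cites \cite[Prop.~4.1, Thm.~4.12, Thm.~5.8]{LeeIntroSmoothManifolds} and remarks that the lemma is a consequence of the inverse function theorem. Your chart reduction, the auxiliary map $F(u,v)=\tilde f(u)+(0,v)$, and the slice-chart conclusion are the same steps used in those references, so this is essentially the paper's approach made explicit.
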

	
	The above lemma follows e.g. from \cite[Proposition 4.1, Theorem 4.12, Theorem 5.8]{LeeIntroSmoothManifolds} and is in fact a consequence of the inverse function theorem (see also \cite[Theorem 1.9.(2)]{SpivakDiffGeom} or \cite[Prop. 1.B5]{DontchevRockafellarTyrrell} for the $C^r$ case).
	

	Let $M,N$ be $C^r$ manifolds and assume that $M$ is compact. Let $C^r(M,N)$ denote the set of all $C^r$ maps $f : M \to N$. We endow $C^r(M,N)$ with the topology of uniform convergence of the derivatives of orders up to $r$, see \cite[Chapter 2]{HirschDiffTop} or \cite[Section 2.15]{N73}. This topology is metrizable with a complete metric. In particular, $C^r(M,\R^k)$ is a locally convex complete linear metrizable space (a Fr\'echet space). 
	
	Every compact $C^r$ manifold $M$ can be embedded into $\R^N$ for $N$ large enough, i.e. $M$ is $C^r$ diffeomorphic to a $C^r$ submanifold in $\R^N$, see e.g. \cite[Theorem 3.4]{HirschDiffTop}. By Whitney's theorem, one can take $N\geq 2\dim M + 1$ and in this case the set of $C^r$ embeddings is open and dense in $C^r(M,\R^N)$ (see \cite[Theorem 2]{Whitney36} and \cite[Theorem 2.15.9]{N73}). If $M \subset \R^N$ is an embedded $C^r$ submanifold in $\R^N$, then for every $x\in M$ we can identify $T_x M$ with a $\dim M$-dimensional linear subspace of $\R^N$.
	
	We shall also consider Riemannian manifolds, i.e. $C^r$ manifolds equipped with a metric tensor $g$. For our needs it suffices to assume that the scalar product $\langle \cdot,\cdot \rangle_{g_x}$ on $T_x M$ depends on $x$ in a continuous way. A metric tensor $g$ induces a Riemannian distance (metric) $\rho = \rho_g$ on $M$, which metrizes the topology on $M$ (see \cite[Chapter 13]{LeeIntroSmoothManifolds}). If $M$ is compact, then given any two metric tensors $g_1, g_2$, the induced metrics are bi-Lipschitz equivalent, i.e. there exists $C>0$ such that $\frac{1}{C} \rho_{g_1}(x,y) \leq \rho_{g_2}(x,y) \leq C \rho_{g_1}(x,y) \text{ for } x,y \in M$,	see e.g. \cite[Lemma 2.53]{LeeRiemannian}. Similarly, a $C^r$ embedding $f : M \to \R^k$ of a compact, Riemannian $C^r$ manifold is a bi-Lipschitz map between $M$ endowed with the Riemannian distance and $f(M)$ endowed with the Euclidean distance.
	
	\subsection{Prevalence} Let $V$ be a complete linear metric space. A Borel set $S \subset V$ is called \textbf{prevalent} if there exists a Borel measure $\nu$ in $V$, which is positive and finite on some compact set in $V$, such that for every $v \in V$, there holds $v + e \in S$ for $\nu$-almost every $e \in V$. A non-Borel subset of $V$ is prevalent if it contains a prevalent Borel subset. This notion may be considered as an analogue of `Lebesgue almost sure' condition in infinite dimensional spaces. For more information on prevalence we refer to \cite{Prevalence92} and \cite[Chapter~5]{Rob11}. We shall consider prevalence in the spaces $C^r(M, \R^k)$. Note that if $L : V \to W$ is a linear isomorphism of complete linear metric spaces, then set $S$ is prevalent in $V$ if and only if $L(S)$ is prevalent in $W$. This leads to the following remark.
	
	\begin{rem}\label{rem: embed}
		Let $M$ be a compact $C^r$-manifold and let $\psi : M \to \R^N$ be a $C^r$ embedding. Then $\psi(M)$ is a $C^r$-submanifold in $\R^N$ and $\psi$ induces a linear isomorphism $L : C^r(M, \R^k) \to C^r(\psi(M), \R^k)$ given by $L(f) = f \circ \psi^{-1}$. As by Whitney's theorem every compact $C^r$ manifold $M$ embeds into $\R^N$ for some $N$, we see that in order to prove that for such manifolds a certain property holds for a prevalent $C^r$ function $f : M \to \R^k$, it suffices to consider the case of $C^r$ manifolds embedded in a Euclidean space.
	\end{rem}
	
	\section{Proof of Theorem \ref{thm: projections main}}
	
	We shall prove Theorem \ref{thm: projections main} by establishing the following, more general result.
	
	\begin{thm}\label{thm: projections detail}
		Let $M \subset \R^N$ be a compact, embedded $C^r$ submanifold in $\R^N$. Let $X \subset M$ be a compact set and let $\mu$ be a Borel probability measure on $X$. Fix $k \in \N$ and a $C^r$ function $\phi : M \to \R^k$. The following hold for Lebesgue almost every linear map $L \in \LIN(\R^N, \R^k)$, where write $\phi_L : M \to \R^k$ for $\phi_L := \phi + L$
		\begin{enumerate}
			\item\label{it: proj detail biLip} if $\mH^k(X) = 0$ and $k \geq \dim M$, then for $\mu$-a.e. $x$ there exists $C = C(x, L) \in [0, \infty)$ so that
			\begin{equation}\label{eq: proj biLip} \| x- y \| \leq C \|\phi_L(x) - \phi_L(y)\| \text{ holds for every } y \in X,
			\end{equation}
			\item\label{it: proj detail diffeo} if $k > \dim M$, then for $\mu$-a.e. $x$ there exist open neighbourhoods $U$ of $x$ in $M$ and $V$ of $\phi_L(x)$ in $\R^k$ such that $\phi_L$ is a $C^r$ diffeomorphism between $U$ and $V \cap \phi_L(M)$.
		\end{enumerate}
	\end{thm}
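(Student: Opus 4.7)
The plan is to prove part (1) by a Fubini argument — fixing $x \in X$ and showing that the pointwise bi-Lipschitz inequality \eqref{eq: proj biLip} holds at $x$ for Lebesgue-a.e.\ $L \in \LIN(\R^N,\R^k)$ — and then to derive part (2) from part (1) applied to $M$ itself. For fixed $x$, compactness of $X$ shows that \eqref{eq: proj biLip} at $x$ is equivalent to the conjunction of
\begin{enumerate}[(a)]
\item $D_x\phi_L = D_x\phi + L|_{T_xM}$ having rank $\dim M$ (giving local bi-Lipschitz behavior near $x$ by Taylor expansion), and
\item $\phi_L(y) \ne \phi_L(x)$ for every $y \in X \setminus \{x\}$ (which, combined with continuity of $\phi_L$ and compactness, yields a uniform lower bound on $\|\phi_L(y)-\phi_L(x)\|$ for $y$ away from $x$).
\end{enumerate}
Property (a) is immediate from Lemma~\ref{lem: full rank} applied to $V = T_xM$, $W = \R^k$ using $\dim M \le k$.

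The heart of the argument, and the main obstacle, is (b). The bad set is
\[
B(x) = \bigcup_{y \in X\setminus\{x\}} \bigl\{ L \in \LIN(\R^N,\R^k) : L(y-x) = \phi(x) - \phi(y) \bigr\},
\]
a union of codimension-$k$ affine subspaces parametrized by $y$. To bound $\Leb(B(x) \cap \{\|L\| \le R\})$ I would partition $X \setminus \{x\}$ into dyadic annuli $X_n = \{y : 2^{-n-1} \le \|y-x\| < 2^{-n}\}$ and, using $\mH^k(X_n) \le \mH^k(X) = 0$, cover each $X_n$ by sets $\{U_i^{(n)}\}_i$ with $\sum_i \diam(U_i^{(n)})^k \le \eta_n$ for any prescribed $\eta_n > 0$. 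The key estimate is that for $y_0, y \in U_i^{(n)}$ the two affine constraints on $L$ differ by at most $(R + \Lip(\phi|_M))\diam(U_i^{(n)})$ as vectors in $\R^k$; hence the contribution of $U_i^{(n)}$ to $B(x) \cap \{\|L\| \le R\}$ is contained in a tube of thickness $\lesssim (R + \Lip(\phi|_M))\diam(U_i^{(n)})$ around a codim-$k$ affine subspace, of Lebesgue measure at most $C_{N,k,R}\bigl(\diam U_i^{(n)} / \|y_0 - x\|\bigr)^k \le C_{N,k,R}\, 2^{(n+1)k}\diam(U_i^{(n)})^k$. Summing over $i$ and $n$ and choosing $\eta_n$ so that $\sum_n 2^{nk}\eta_n$ is arbitrarily small yields $\Leb(B(x)\cap\{\|L\|\le R\}) = 0$ for every $R$, and hence $\Leb(B(x)) = 0$. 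A standard measurability check and Fubini then complete part (1).

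For part (2), since $k > \dim M$, we have $\mH^k(M) = 0$, so part (1) applies with $X$ replaced by $M$ (viewing $\mu$ as a Borel measure on $M$ by extension by zero). For Lebesgue-a.e.\ $L$ and $\mu$-a.e.\ $x$ we thus obtain both the bi-Lipschitz inequality against all $y \in M$ and $\rank D_x\phi_L = \dim M$ (Lemma~\ref{lem: full rank} with $\dim M < k$). Lemma~\ref{lem: immersion} then provides an open $U \ni x$ in $M$ such that $\phi_L(U)$ is an embedded $C^r$ submanifold of $\R^k$ and $\phi_L|_U$ is a $C^r$ diffeomorphism onto $\phi_L(U)$. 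The bi-Lipschitz inequality ensures that for a sufficiently small open ball $V \ni \phi_L(x)$ in $\R^k$ we have $\phi_L^{-1}(V) \cap M \subset U$; after replacing $U$ by $U \cap \phi_L^{-1}(V)$, the restriction $\phi_L : U \to V \cap \phi_L(M)$ is the sought $C^r$ diffeomorphism.
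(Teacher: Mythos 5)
Your proposal is correct, and its overall architecture coincides with the paper's: split the problem into a local immersion condition (your (a), the paper's Fact~2) and a global injectivity/continuity condition (your (b), the paper's Fact~1), establish each for fixed $x$ and Lebesgue-a.e.\ $L$, and combine them via Fubini; part~(2) then follows by running part~(1) with $X$ replaced by $M$ together with Lemma~\ref{lem: immersion}. The one genuine difference is how the second ingredient is obtained. The paper imports it as a black box, citing \cite[Theorem~4.2]{BGSRegularity} for a.e.\ pointwise continuity of $\phi_L^{-1}$; you instead reprove it from scratch, showing $\Leb(B(x))=0$ where $B(x)=\bigcup_{y\in X\setminus\{x\}}\{L:L(y-x)=\phi(x)-\phi(y)\}$, via dyadic annuli, $\mathcal H^k$-small covers, and the tube bound $\Leb\bigl(\{L\in B(0,R): \|L(y_0-x)+\phi(y_0)-\phi(x)\|\le t\}\bigr)\lesssim (t/\|y_0-x\|)^k$, which is precisely the singular-value estimate of Lemma~\ref{lem: key_ineq_inter} applied to the rank-one-tensor map $L\mapsto L(y_0-x)$. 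This is in fact the linear-perturbation analogue of the argument the paper carries out explicitly for the delay-coordinate case in Proposition~\ref{prop: takens injective at point}, so your route is more self-contained without being longer or structurally new. Two harmless cosmetic points: the claim that \eqref{eq: proj biLip} at $x$ is \emph{equivalent} to (a)$\wedge$(b) is a mild overstatement — to recover (a) from \eqref{eq: proj biLip} one needs points of $X$ approaching $x$ from all tangent directions, which need not happen when $X\subsetneq M$ — but only the implication (a)$\wedge$(b)$\Rightarrow$\eqref{eq: proj biLip} is used, so nothing breaks; and the dyadic index should range over $n\ge -\lceil\log_2\diam X\rceil$ rather than $n\ge 0$, which adds only finitely many terms to the sum and does not affect the conclusion.
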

	
	Before proving the theorem, let us note that in the special case $\phi = 0$ we obtain a result for typical orthogonal projections, in the spirit of Marstrand-type projection theorems in geometric measure theory \cite{Marstrand, Mattila-proj, HuTaylorProjections}. Let $\Gr(k,N)$ denote the Grassmannian of $k$-dimensional linear subspaces of $\R^N$, equipped with the $O(N)$-invariant probability measure \cite[Section 3.9]{mattila}. For $V\in \Gr(d,N)$ we denote by $P_V : \R^N \to V$ the orthogonal projection onto $V$.
	
	\begin{thm}\label{thm: projections orth}
		Let $M \subset \R^N$ be a compact, embedded $C^r$ submanifold in $\R^N$. Let $X \subset M$ be a compact set and let $\mu$ be a Borel probability measure on $X$. Fix $k \in \N$. Then the following holds for almost every $V \in \Gr(k,N)$
		\begin{enumerate}
			\item\label{it: proj orth biLip} if $\mH^k(X) = 0$ and $k \geq \dim M$, then for $\mu$-a.e. $x$ there exists $C = C(x, V) \in [0, \infty)$ so that
			\[ \| x- y \| \leq C \|P_V x - P_V y\| \text{ holds for every } y \in X,\]
			\item\label{it: proj orth diffeo} if $k > \dim M$, then for $\mu$-a.e. $x$ there exist open neighbourhoods $U$ of $x$ in $M$ and $V$ of $P_V(x)$ in $\R^k$ such that $P_V$ is a $C^r$ diffeomorphism between $U$ and $V \cap P_V(M)$.
		\end{enumerate}
	\end{thm}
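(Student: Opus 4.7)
The plan is to deduce Theorem~\ref{thm: projections orth} from Theorem~\ref{thm: projections detail} applied to the zero map $\phi \equiv 0$, by transferring the Lebesgue-almost-everywhere conclusion on $\LIN(\R^N,\R^k)$ to a $\gamma_{k,N}$-almost-everywhere conclusion on the Grassmannian via the natural assignment $L \mapsto (\ker L)^\perp$, where $\gamma_{k,N}$ denotes the $O(N)$-invariant probability measure on $\Gr(k,N)$.

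First I would apply Theorem~\ref{thm: projections detail} with $\phi \equiv 0$ to produce a full-Lebesgue-measure Borel set $\mathcal{L}_0 \subset \LIN(\R^N,\R^k)$ of linear maps satisfying both conclusions with $\phi_L$ replaced by $L$. By Lemma~\ref{lem: full rank} every $L \in \mathcal{L}_0$ may be assumed to have rank $k$, and then decomposes canonically as $L = A(L) \circ P_{V(L)}$ with $V(L) := (\ker L)^\perp \in \Gr(k,N)$ and $A(L) := L|_{V(L)}$ a linear isomorphism onto $\R^k$. Since the singular values of $A(L)$ are bounded away from zero, $A(L)$ distorts Euclidean distances only by a fixed multiplicative factor; consequently the bi-Lipschitz inequality \eqref{eq: proj biLip} for $L$ at $x$ is equivalent to the analogous inequality for $P_{V(L)}$ at $x$ (with a different constant), and $L|_U$ is a $C^r$ diffeomorphism onto an open subset of $L(M)$ if and only if $P_{V(L)}|_U$ is a $C^r$ diffeomorphism onto an open subset of $P_{V(L)}(M)$. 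Hence both conclusions of Theorem~\ref{thm: projections orth} hold at $V = V(L)$ whenever $L \in \mathcal{L}_0$.

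It remains to show that the set $\{V(L) : L \in \mathcal{L}_0\}$ has full $\gamma_{k,N}$-measure, which in turn reduces to checking that the pushforward of the Lebesgue-measure class on $\LIN(\R^N,\R^k)$ under $L \mapsto V(L)$ coincides with the class of $\gamma_{k,N}$. For this I would exploit that the $O(N)$-action $L \mapsto L \circ U^{-1}$ preserves Lebesgue measure on $\LIN(\R^N,\R^k)$ (its Jacobian is $(\det U)^k = \pm 1$) as well as the Hilbert--Schmidt unit ball, and satisfies $V(L \circ U^{-1}) = U \cdot V(L)$. Thus the pushforward of Lebesgue measure restricted to this ball is a finite, nonzero, $O(N)$-invariant Borel measure on the compact homogeneous space $\Gr(k,N)$, and by uniqueness of the $O(N)$-invariant probability measure on this space it must be a positive multiple of $\gamma_{k,N}$. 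Consequently $\gamma_{k,N}$-null sets pull back under $V$ to Lebesgue-null sets, which together with the reduction above completes the proof.

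The main obstacle is the measure-theoretic identification in the last paragraph: once the pushforward measure class is matched with $\gamma_{k,N}$, the transfer is automatic, and all other ingredients are a direct application of Theorem~\ref{thm: projections detail} together with elementary linear algebra.
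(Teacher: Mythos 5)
Your proposal is correct and follows essentially the same route as the paper: the paper likewise deduces Theorem~\ref{thm: projections orth} by applying Theorem~\ref{thm: projections detail} with $\phi=0$, deferring the transfer from Lebesgue-a.e.\ $L\in\LIN(\R^N,\R^k)$ to $\gamma_{k,N}$-a.e.\ $V\in\Gr(k,N)$ to \cite[Remark~3.1]{BGSRegularity}, and your polar decomposition $L=A(L)\circ P_{V(L)}$ together with the $O(N)$-equivariance/uniqueness argument is precisely the content of that transfer. (One small inaccuracy: Lemma~\ref{lem: full rank} as stated requires $\dim V\le\dim W$, so it does not directly give that a.e.\ $L\in\LIN(\R^N,\R^k)$ has rank $k$; but this is immediate anyway since the rank-deficient locus is a proper algebraic subvariety.)
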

	
	The above result follows from Theorem \ref{thm: projections detail} applied with $\phi=0$ (see \cite[Remark 3.1]{BGSRegularity}).
		
 	\begin{proof}[{\bf Proof of Theorem \ref{thm: projections detail}}]
	The proof combines two elements: results on global almost sure continuity of the inverse to the embedding and its local differentiability at almost every point. The former one follows from \cite{BGSRegularity}, while the latter is obtained as an application of Lemma \ref{lem: immersion} . We explain the details below. \cite[Theorem 4.2]{BGSRegularity} gives the following:\\
	
	\textbf{Fact 1.}:  if $\mH^k(X) = 0$, then for almost every $L \in \LIN(\R^N, \R^k)$, the following holds for $\mu$-a.e. $x \in X$: for every $\eps > 0$ there exists $\delta = \delta(x, L, \eps) > 0$ such that
	\begin{equation}\label{eq: proj cont}
		\text{for every }  y \in X, \text{ if } \|\phi_L(x) - \phi_L(y)\| \leq \delta, \text{ then } \|x - y\| \leq \eps.
	\end{equation}\\

	On the other hand, if $k \geq \dim M$, then Lemma \ref{lem: immersion}  gives that if $\rank(D_x \phi_L) = \dim M$, then there exists an open neighbourhood $U$ of $x$ in $M$ such that $\phi_L$ is a $C^r$ diffeomorphism of $U$ onto its image. For $x \in X$, we have $D_x \phi_L = D_x \phi + L|_{T_x M} \in \LIN(T_x M, \R^k)$ (recall that we assume $M$ to be embedded in $\R^N$, hence we can treat $T_x M$ as a linear subspace of $\R^N$). As $\dim T_x M = \dim M \leq k$, Lemma \ref{lem: full rank} gives that for given $x \in X$, equality $\rank(D_x \phi_L) = \dim M$ holds for Lebesgue almost every $L \in \LIN(\R^N, \R^k)$. By Fubini's theorem and Lemma \ref{lem: immersion} we conclude the following:\\
	
	\textbf{Fact 2.}: if $k \geq \dim M$, then for almost every $L \in \LIN(\R^N, \R^k)$, the following holds for $\mu$-a.e. $x \in X$: 
	\begin{equation}\label{eq: proj diffeo}
	\begin{gathered}
		\text{there exists an open neighbourhood } U \text{ of } x \text{ in } M \text{ such that } \\
		\phi_L \text{ is a } C^r\text{-diffeomorphism between } U \text{ and } \phi_L(U).
	\end{gathered}
	\end{equation}\\
	
	We shall now explain how Theorem \ref{thm: projections detail} follows from Facts 1 and 2. For point \eqref{it: proj detail biLip} we argue as follows. By Fubini's theorem, it suffices to prove that for fixed $x \in X $, for almost every $L \in \LIN(\R^N, \R^k)$ there exists $C = C(x,L)$ such that \eqref{eq: proj biLip} holds. By Fact 2, for almost every $L \in \LIN(\R^N, \R^k)$ there exists an open neighbourhood $U$ of $x$ such that \eqref{eq: proj diffeo} holds, and hence also \eqref{eq: proj biLip} holds for all $y \in U \cap M$. Let $\eps = \eps(x, L)>0$ be such that $\overline{B(x,\eps)} \cap M \subset U$. By Fact 1, for Lebesgue almost every $L \in \LIN(\R^N, \R^k)$ there exists $\delta = \delta(x,L, \eps)>0$ such that \eqref{eq: proj cont} holds for the above choice of $\eps$ and $\delta$. Take now $y \in X \setminus U$. Then $\|x-y\| > \eps$, hence by \eqref{eq: continuity} $\|\phi_L(x) - \phi_L(y)\| > \delta$. Therefore
	\[ \|x - y\| \leq \diam(X) = \frac{\diam(X)}{\delta} \delta \leq \frac{\diam(X)}{\delta} \|\phi_L(x) - \phi_L(y)\|. \]
	Therefore \eqref{eq: proj biLip} holds for all $y \in X$.
	
	For point \eqref{it: proj detail diffeo}, it again follows from Fubini's theorem, that it suffices to prove that for fixed $x \in M$, for almost every $L \in \LIN(\R^N, \R^k)$, given the open neighbourhood $U$ of $x$ in $M$ satisfying \eqref{eq: proj diffeo}, there exists an open neighbourhood $V$ of $\phi_L(x)$ in $\R^k$ with $V \cap \phi_L(M) \subset \phi_L(U)$. This follows from Fact 1 applied for $X = M$ (we can apply it as $k > \dim M$ implies $\mH^k(M) = 0$). Indeed, for almost every $L \in \LIN(\R^N, \R^k)$ one can choose $\eps > 0$ so that $\overline{B(x, \eps)} \subset U \cap M$ and it follows that $V = B(\phi_L(x), \delta)$ satisfies $V \cap \phi_L(M) \subset \phi_L(U)$, where $\delta = \delta(x, L, \eps) > 0$ is as in Fact 1.
	
	\end{proof}

	Theorem \ref{thm: projections main} follows from Theorem \ref{thm: projections detail} directly by the definition of prevalence and Remark \ref{rem: embed}.
	
	\section{Proof of Theorem \ref{thm: takens main}}
	
	The following is the general version of Theorem \ref{thm: takens main}.
	
	\begin{thm}\label{thm: takens detail}
	Let $M \subset \R^N$ be a compact, embedded $C^r$ submanifold in $\R^N$. Let $T : M \to M$ be a $C^r$ diffeomorphism. Let $\mu$ be a Borel probability measure on $X$. Fix $k \in \N$ and assume $\mu \left( \bigcup \limits_{j=1}^{k-1} \Per_j(T) \right) = 0$. Let $h_1, \ldots, h_m : \R^N \to \R$ be a base of the linear space of $N$-variate polynomials of degree at most $2k-1$. Fix a $C^r$ observable $h : M \to \R$. For $\alpha \in \R^m$, let $\phi_{h_\alpha,k}$ be the $k$-delay coordinate map corresponding to the observable $h_\alpha = h + \sum \limits_{j=1}^m \alpha_j h_j$. The following hold for Lebesgue almost every $\alpha \in \R^m$:
	\begin{enumerate}
		\item\label{it: takens detail biLip} if $\mH^k(X) = 0$ and $k \geq \dim M$, then for $\mu$-a.e. $x$ there exists $C = C(x, \alpha) \in [0, \infty)$ so that
		\begin{equation}\label{eq: takens biLip} \| x- y \| \leq C \|\phi_{h_\alpha,k}(x) - \phi_{h_\alpha,k}(y)\| \text{ holds for every } y \in X,
		\end{equation}
		\item\label{it: takens detail diffeo} if $k > \dim M$, then for $\mu$-a.e. $x$ there exist open neighbourhoods $U$ of $x$ in $M$ and $V$ of $\phi_{h_\alpha,k}(x)$ in $\R^k$ such that $\phi_{h_\alpha,k}$ is a $C^r$ diffeomorphism between $U$ and $V \cap \phi_{h_\alpha,k}(M)$.
	\end{enumerate}
	\end{thm}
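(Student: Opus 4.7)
The proof will follow the same two-step template that was used to derive Theorem \ref{thm: projections detail}. I will establish a dynamical Fact 1 (pointwise continuity of the inverse) and a dynamical Fact 2 (immersion at $\mu$-a.e.\ point) for the family $\phi_{h_\alpha,k}$ indexed by $\alpha \in \R^m$; once these are in place, the splicing argument at the end of the proof of Theorem \ref{thm: projections detail} transfers verbatim, since it only uses compactness of $X$, the local diffeomorphism conclusion of Lemma \ref{lem: immersion}, and the two facts themselves. In particular, part \eqref{it: takens detail diffeo} again follows by applying Fact 1 with $X = M$, which is legitimate because $k > \dim M$ forces $\mH^k(M) = 0$.

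Fact 1 in the dynamical setting is the statement that, under $\mH^k(X) = 0$ and $\mu\bigl(\bigcup_{p=1}^{k-1}\Per_p(T)\bigr) = 0$, for Lebesgue a.e.\ $\alpha \in \R^m$ and $\mu$-a.e.\ $x \in X$, for every $\eps > 0$ there exists $\delta > 0$ with $\|\phi_{h_\alpha,k}(x) - \phi_{h_\alpha,k}(y)\| \leq \delta$ implying $\|x-y\| \leq \eps$ for every $y \in X$. I would derive it from the dynamical counterpart of \cite[Theorem 4.2]{BGSRegularity}, formulated precisely for the polynomial perturbation class $h_\alpha = h + \sum_j \alpha_j h_j$; this is the workhorse probabilistic Takens continuity theorem in that paper.

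Fact 2 is the new ingredient and the heart of the proof. Differentiating,
\[ D_x \phi_{h_\alpha,k}(v) = \bigl( D_{T^l x} h_\alpha \circ D_x T^l(v) \bigr)_{l=0}^{k-1}, \qquad v \in T_x M, \]
which depends affinely on $\alpha$; denote its linear part by $A_x : \R^m \to \LIN(T_x M, \R^k)$. The plan is to prove that for $\mu$-a.e.\ $x$ --- specifically for those $x$ whose first $k$ iterates $y_l := T^l x$ are pairwise distinct, which holds outside $\bigcup_{p=1}^{k-1}\Per_p(T)$ --- the linear map $A_x$ is surjective. Given surjectivity, the pushforward of Lebesgue measure on $\R^m$ under the affine parametrization $\alpha \mapsto D_x \phi_{h_\alpha,k}$ is absolutely continuous with respect to Lebesgue on $\LIN(T_x M, \R^k)$, so Lemma \ref{lem: full rank} (applicable since $\dim T_x M = \dim M \leq k$) yields $\rank(D_x \phi_{h_\alpha,k}) = \dim M$ for a.e.\ $\alpha$. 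Fubini then delivers Fact 2, and Lemma \ref{lem: immersion} supplies the local $C^r$ diffeomorphism.

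The surjectivity of $A_x$ is the main obstacle; it is also where the degree $2k-1$ becomes natural. Using that $D_x T^l : T_x M \to T_{y_l}M \subset \R^N$ is injective and that any covector on $T_x M$ extends to a covector on $\R^N$, it suffices to show that for any $k$ pairwise distinct points $y_0, \ldots, y_{k-1} \in \R^N$ and any covectors $\eta_0, \ldots, \eta_{k-1} \in (\R^N)^*$, there exists a polynomial $p$ of degree at most $2k-1$ with $D_{y_l} p = \eta_l$ for every $l$; since $h_1, \ldots, h_m$ span this space, $A_x$ will then surject onto $\LIN(T_x M, \R^k)$. The interpolant is built by superposing
\[ p_l(y) = \eta_l(y - y_l) \prod_{l' \neq l} \frac{\|y - y_{l'}\|^2}{\|y_l - y_{l'}\|^2}, \]
each vanishing to order two at every $y_{l'}$ with $l' \neq l$, satisfying $D_{y_l} p_l = \eta_l$, and having degree exactly $1 + 2(k-1) = 2k - 1$. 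Non-periodicity of $x$ guarantees that the denominators are nonzero and the points are distinct, which is precisely why the hypothesis $\mu(\bigcup_{p=1}^{k-1}\Per_p(T)) = 0$ enters here.
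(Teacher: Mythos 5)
Your overall template is the right one, and the novel ingredient — Fact 2 — is handled correctly and essentially the same way the paper does it. The reduction to surjectivity of the linear map $\alpha \mapsto A_x\alpha$, the appeal to Lemma~\ref{lem: full rank} plus absolute continuity, and the Fubini step all mirror the paper's Proposition~\ref{thm: takens diff at point}. Your explicit Hermite-type interpolant
$p_l(y) = \eta_l(y-y_l)\prod_{l'\neq l}\|y-y_{l'}\|^2/\|y_l-y_{l'}\|^2$
is a correct and self-contained alternative to the paper's invocation of \cite[Lemma 4.1]{SYC91}; the degree $2k-1$ bookkeeping checks out and the double vanishing at the other orbit points gives exactly the gradient-interpolation you need. (Minor slip: the covectors you must extend live on $T_{y_l}M$, not $T_x M$; you then pull back through the isomorphism $D_x T^l$ — this is what the paper's Corollary~\ref{cor: manifold interpolation} does.)

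The genuine gap is in Fact 1. You assert that a ``dynamical counterpart'' of \cite[Theorem 4.2]{BGSRegularity} is available off-the-shelf for the polynomially perturbed class $h_\alpha$, but no such ready-made statement is used in the paper, and the dynamical case really does require a separate argument that you do not supply. The difficulty is that for the delay map one has $\phi_\alpha(x)-\phi_\alpha(y) = D_{x,y}\alpha + w_{x,y}$ with $D_{x,y}$ built from the differences $h_j(T^l x)-h_j(T^l y)$, and the rank of $D_{x,y}$ is \emph{not} automatically $k$ for $y\neq x$: if $y$ lies on the orbit of $x$, or if $x$ is a short-period point, rows collapse. The paper deals with this by splitting $X\setminus\{x\}$ into $E=\Orb(x)\setminus\{x\}$ (countable, handled by rank $\geq 1$ and a proper-affine-subspace argument) and $F=X\setminus\Orb(x)$ (handled by rank $=k$, a singular-value decomposition of $F$ into $F_n$, and Lemma~\ref{lem: key_ineq_inter}), and it needs Lemma~\ref{lem: rank} for those rank statements — which is in turn where the $2k$-interpolating property of $h_1,\dots,h_m$ and the non-periodicity of $x$ enter. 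None of this occurs in the non-dynamical setting, where the perturbation $L(x-y)$ trivially has full rank. So Fact 1 cannot be transferred ``verbatim'' from the projection case; it is precisely the dynamical content of Proposition~\ref{prop: takens injective at point} (whose proof follows \cite[Theorem 4.3]{BGS20} and \cite[Theorem 1.21]{BGSPredict}) that your proposal leaves unaddressed.
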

	As in the previous section, Theorem \ref{thm: takens main}  follows from Theorem \ref{thm: takens detail} with the use of Remark \ref{rem: embed}. The idea of the proof is similar to the one of Theorem \ref{thm: projections detail} - our goal is to establish analogues of Facts 1. and 2. for delay-coordinate maps with polynomially perturbed observables. This is realized in Propositions \ref{prop: takens injective at point} and \ref{thm: takens diff at point} below. For the rest of this section, we assume that we are in the setting of Theorem \ref{thm: projections detail} and denote for short $\phi_{\alpha} := \phi_{h_\alpha, k}$.
	
	\subsection{Global almost sure continuity} We begin by proving an analogue of Fact 1, formulated as follows.
	\begin{prop}\label{prop: takens injective at point} Assume that $\mH^k(X) = 0$. For almost every $\alpha \in \R^m$, the delay coordinate map $\phi_\alpha$ corresponding to the observable $h_\alpha$ has the property that for $\mu$-a.e. $x \in X \setminus \bigcup \limits_{p=1}^{k-1} \Per_p(T)$, for every $\eps > 0$ there exists $\delta = \delta(x,\alpha, \eps) > 0$ such that
	\begin{equation}\label{eq: continuity}
	\text{for every } y \in X, \text{ if }\|\phi_\alpha(x) - \phi_\alpha(y)\| < \delta, \text{ then } \|x - y\| < \eps.
	\end{equation}	
	\end{prop}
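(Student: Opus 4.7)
The plan is to adapt the strategy underlying Fact 1 (that is, \cite[Theorem 4.2]{BGSRegularity}) to the setting of finite-dimensional polynomial perturbations. Letting $\eps$ range over $\{1/n : n \in \N\}$ and invoking Fubini, it suffices to show that for each fixed $\eps > 0$ and $\mu$-a.e.\ $x \in X \setminus \bigcup_{p=1}^{k-1}\Per_p(T)$, the \emph{bad set}
\[B_\eps(x) := \{\alpha \in \R^m : \exists\, y \in X \text{ with } \|x-y\| \geq \eps \text{ and } \phi_\alpha(x) = \phi_\alpha(y)\}\]
has Lebesgue measure zero in $\R^m$. Compactness of $Y_\eps(x) := \{y \in X : \|x-y\| \geq \eps\}$ and continuity of $\phi_\alpha$ then yield \eqref{eq: continuity} for any $\alpha \in \R^m \setminus B_\eps(x)$.

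Since $\phi_\alpha$ depends affinely on $\alpha$, one has
\[\phi_\alpha(x) - \phi_\alpha(y) = u(y) + V(y)\alpha,\]
where $V(y)$ is the $k \times m$ matrix with entries $V_{i,j}(y) = h_j(T^i x) - h_j(T^i y)$ and $u(y) = (h(T^i x) - h(T^i y))_{i=0,\ldots,k-1}$. The solution set $A_y := \{\alpha : \phi_\alpha(x) = \phi_\alpha(y)\}$ is therefore an affine subspace of $\R^m$ of codimension $\rank V(y)$, and $B_\eps(x) = \bigcup_{y \in Y_\eps(x)} A_y$. Split $Y_\eps(x) = Y^{(1)} \sqcup Y^{(2)}$ with $Y^{(1)} := Y_\eps(x) \cap \Orb(x)$ (at most countable) and $Y^{(2)} := Y_\eps(x) \setminus \Orb(x)$.

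The central step is to verify $\rank V(y) = k$ for every $y \in Y^{(2)}$. Since $\{h_1, \ldots, h_m\}$ spans all $N$-variate polynomials of degree $\leq 2k-1$, multivariate polynomial interpolation implies that for any set $Z$ of at most $2k$ distinct points in $\R^N$ the joint evaluation $\alpha \mapsto (h_\alpha(z))_{z \in Z}$ is a surjection $\R^m \to \R^{|Z|}$. For $y \in Y^{(2)}$, disjointness of $\Orb(x), \Orb(y)$ combined with $x \notin \bigcup_{p=1}^{k-1}\Per_p(T)$ ensures that $\{T^i x\}_{i=0}^{k-1} \cup \{T^i y\}_{i=0}^{k-1}$ is a set of at most $2k$ distinct points in which the $T^i x$ are pairwise distinct. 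A direct computation (setting the free $y$-side evaluations to zero and reading off arbitrary $x$-side values) then shows $E_y(\alpha) := (h_\alpha(T^i x) - h_\alpha(T^i y))_i$ is surjective onto $\R^k$, regardless of the periodicity of $y$, so $\rank V(y) = k$. For $y \in Y^{(1)}$ the same interpolation separates $x$ from $y = T^p x$ via some $h_j$, yielding only $\rank V(y) \geq 1$.

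Given $\rank V(y) = k$ on $Y^{(2)}$, the implicit function theorem applied locally to $V(y)\alpha = -u(y)$ yields a Lipschitz parametrization $(y, w) \mapsto \alpha_0(y) + w$ of $\bigcup_{y \in Y^{(2)}} A_y$ (after intersecting with any bounded ball), with $w$ ranging over a bounded subset of $\R^{m-k}$ and $\alpha_0$ a smooth local right-inverse. The product inequality $\mH^{s+t}(A \times B) \leq c\, \mH^s(A)\,\mH^t(B)$ with $s=k$, $t=m-k$, together with $\mH^k(Y^{(2)}) \leq \mH^k(X) = 0$, gives $\mH^m\bigl(\bigcup_{y \in Y^{(2)}} A_y\bigr) = 0$ and hence $\Leb_m\bigl(\bigcup_{y \in Y^{(2)}} A_y\bigr) = 0$. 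The remaining piece $\bigcup_{y \in Y^{(1)}} A_y$ is a countable union of proper affine subspaces and is $\Leb_m$-null as well. The principal obstacle is the rank-$k$ verification on $Y^{(2)}$: this is precisely where the degree $2k-1$ of the perturbation basis is used, matching the ``$2k$ orbit points to be interpolated'' that appear in any comparison of $\phi_\alpha(x)$ with $\phi_\alpha(y)$.
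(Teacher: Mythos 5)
Your overall architecture matches the paper's: reduce via Fubini to a fixed $x$, split the set of potential collision partners into the (countable) orbit part and its complement, verify $\rank V(y) = k$ on the complement using the $2k$-point interpolation property of degree-$\leq 2k{-}1$ polynomials, then show the set of bad $\alpha$ is $\Leb_m$-null. The rank verification for $y \in Y^{(2)}$ is correct and is essentially \cite[Lemma 4.5]{BGS20}, which the paper cites. The genuine divergence is in the final measure-zero step: the paper covers $F_n := \{y : \sigma_k(D_{x,y}) \geq 1/n\}$ by small balls and applies Robinson's singular-value bound (Lemma~\ref{lem: key_ineq_inter}) directly to the Lebesgue measure of $\{\alpha : \|D_{x,y_i}\alpha + w_{x,y_i}\| \leq Q\eps_i\}$, whereas you bound the bad set as a Lipschitz image of $Y^{(2)} \times B_{m-k}$ and use a Hausdorff-measure product inequality. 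Both are viable, but the paper's route avoids the issues below.

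There is a gap in your parametrization step. The Lipschitz constant of the local right inverse $\alpha_0(y)$ (and of the parametrization of the moving kernel $\Ker V(y)$) scales like $1/\sigma_k(V(y))$, and $\sigma_k(V(y))$ is \emph{not} bounded below on $Y^{(2)}$: the set $Y^{(2)} = Y_\eps(x) \setminus \Orb(x)$ is not compact, and its closure in $X$ can meet $\Orb(x)$, where only $\rank V \geq 1$ is available. So there is no single Lipschitz map covering $\bigcup_{y \in Y^{(2)}} A_y$. Your phrase ``applied locally'' gestures at the fix, but you need to make explicit either the stratification $Y^{(2)} = \bigcup_n \{y : \sigma_k(V(y)) \geq 1/n\}$ (precisely the paper's $F_n$) or a Lindel\"of covering of $Y^{(2)}$ by open sets on each of which $\sigma_k$ is uniformly bounded below, and then apply the product inequality stratum by stratum. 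Separately, the product inequality $\mH^{s+t}(A \times B) \leq c\,\mH^s(A)\,\mH^t(B)$ is false in general (there are sets of Hausdorff dimension $0$ whose square has dimension $1$); it holds in the instance you need because $B$ is a Euclidean ball, which is Ahlfors regular of the right dimension, but this should be said. Finally, the map $(y,w) \mapsto \alpha_0(y) + w$ does not actually cover $A_y = \alpha_0(y) + \Ker V(y)$ as written, since $\Ker V(y)$ rotates with $y$; one needs $(y,w) \mapsto \alpha_0(y) + \sum_j w_j e_j(y)$ for a locally Lipschitz frame $\{e_j(y)\}$ of $\Ker V(y)$, which again requires the local lower bound on $\sigma_k$.
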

	
	The proof combines methods of the proofs of \cite[Theorem 4.3]{BGS20} and \cite[Theorem 1.21]{BGSPredict}. We shall need the following notation and observations. For $x,y \in M$ we have
	\begin{equation}\label{eq:matrix_form} \phi_\alpha(x) - \phi_\alpha(y) = D_{x,y}\alpha + w_{x,y}
	\end{equation}
	for a $k\times m$ matrix $D_{x,y}$ defined by
	\begin{equation}\label{eq:D_xy}
		D_{x,y} = \begin{bmatrix} h_1(x) - h_1(y) & \ldots & h_m(x) - h_m(y) \\
			h_1(Tx) - h_1(Ty) & \ldots & h_m(Tx) - h_m(Ty) \\
			\vdots & \ddots & \vdots \\
			h_1(T^{k-1}x) - h_1(T^{k-1}y) & \ldots & h_m(T^{k-1}x) - h_m(T^{k-1}y) \\
		\end{bmatrix} 
	\end{equation}
	and a vector $w_{x,y} \in \R^m$
	\[
	w_{x,y} = \begin{bmatrix} h(x) - h(y) \\
		h(Tx) - h(Ty)\\
		\vdots \\
		h(T^{k-1} x) - h(T^{k-1}y) \end{bmatrix}.
	\]
	
	For $x \in M$ we define its \textbf{full orbit} as
	\[ \Orb(x) = \{ T^n x : n \geq 0 \} \cup \{ y \in X : T^n y = x \text{ for some } n \in \N \}.\]
	
	We will use the following lemma.
	
	\begin{lem}[{\cite[Lemma 4.5]{BGS20}}]\label{lem: rank}
		The following hold for $x, y \in M$
		\begin{enumerate}[(i)]
			\item\label{it: rank 1} if $x \neq y$ then $\rank(D_{x,y}) \geq 1$,
			\item\label{it: rank k} if $x \notin \bigcup \limits_{p=1}^{k-1} \Per_p(T)$ and $x \notin \Orb(y)$, then $\rank(D_{x,y})=k$.
		\end{enumerate}
	\end{lem}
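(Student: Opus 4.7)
The plan is to prove (i) and (ii) separately, exploiting that $h_1, \ldots, h_m$ is a basis for the space of $N$-variate polynomials of degree at most $2k-1$. In particular this space contains the coordinate functions on $\R^N$ and, more usefully, arbitrary products of up to $2k-1$ linear forms.

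For (i), I would look at the first row of $D_{x,y}$, namely $(h_1(x)-h_1(y), \ldots, h_m(x)-h_m(y))$. Since the coordinate functions on $\R^N$ lie in the span of $h_1, \ldots, h_m$, and $x \neq y$ in $\R^N$ means at least one coordinate differs, at least one entry of this row is non-zero, giving $\rank(D_{x,y}) \geq 1$.

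For (ii), the task reduces to showing that the $k$ rows of $D_{x,y}$ are linearly independent. A vector $c = (c_0, \ldots, c_{k-1})$ annihilates the rows on the left iff $\sum_{i=0}^{k-1} c_i (h_j(T^i x) - h_j(T^i y)) = 0$ for every $j = 1, \ldots, m$. By the spanning property of the $h_j$, this holds iff the signed counting measure
\[ \sigma := \sum_{i=0}^{k-1} c_i \bigl( \delta_{T^i x} - \delta_{T^i y} \bigr) \]
annihilates every polynomial of degree at most $2k-1$. The support of $\sigma$ is contained in $\{T^i x\}_{i=0}^{k-1} \cup \{T^j y\}_{j=0}^{k-1}$, a set of cardinality at most $2k$. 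For any $z$ in this support, one can build a polynomial of degree at most $2k-1$ equal to $1$ at $z$ and $0$ at every other support point, by taking the product of suitable linear forms (one per competing point, each vanishing there and non-zero at $z$, then normalized). Evaluating $\sigma$ against such a polynomial forces the coefficient of $\delta_z$ in $\sigma$ to vanish.

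To conclude $c_i = 0$ one needs the coefficient of $\delta_{T^i x}$ in $\sigma$ to be exactly $c_i$, with no cancellation from the $y$-orbit. This is where both hypotheses are used: non-periodicity of $x$ with period less than $k$ guarantees that $T^0 x, \ldots, T^{k-1}x$ are pairwise distinct, and $x \notin \Orb(y)$ combined with invertibility of $T$ guarantees $\{T^i x\}_{i=0}^{k-1}$ is disjoint from $\{T^j y\}_{j=0}^{k-1}$. Indeed, an equality $T^i x = T^j y$ with $j \geq i$ would give $x = T^{j-i}y$, placing $x$ in the forward orbit of $y$, while $i > j$ would give $T^{i-j}x = y$, placing $x$ in the preimage part of $\Orb(y)$; both cases contradict $x \notin \Orb(y)$. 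The main point deserving care is precisely this disjointness check — once in hand, the polynomial interpolation argument closes the proof routinely.
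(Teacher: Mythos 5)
Your proof is correct. The paper itself does not reproduce an argument: it cites \cite[Lemma 4.5]{BGS20} and only verifies that the two hypotheses required there (injectivity of $T$ and the $2k$-interpolating property of $h_1,\ldots,h_m$, which holds because they span the polynomials of degree $\le 2k-1$) are satisfied. Your argument is precisely the one these hypotheses are designed to enable: you reduce linear independence of the rows of $D_{x,y}$ to showing that a signed combination $\sum_{i=0}^{k-1} c_i(\delta_{T^i x}-\delta_{T^i y})$ annihilating every polynomial of degree $\le 2k-1$ must vanish; you then check that the support consists of at most $2k$ \emph{distinct} points, using $x\notin\bigcup_{p=1}^{k-1}\Per_p(T)$ to separate the $T^i x$ from one another and $x\notin\Orb(y)$ together with injectivity of $T$ to separate $\{T^ix\}_{i<k}$ from $\{T^jy\}_{j<k}$ (both cases $j\ge i$ and $i>j$ handled correctly, matching the forward-orbit and preimage parts of the definition of $\Orb$); finally you isolate each $c_i$ by Lagrange-type interpolation with products of at most $2k-1$ linear forms, whose degree bound matches the hypothesis exactly. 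Part (i) via the coordinate functions lying in the span is also correct. No gaps.
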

	
	\begin{proof}
	The proof of \cite[Lemma 4.5]{BGS20} requires $T$ to be injective and $h_1, \ldots, h_m$ to be a $2k$-interpolating family on $M$. This is so in our case, as we assume $h_1, \ldots, h_m$ to be a linear basis of the linear space of $N$-variate polynomials of degree at most $2k-1$, see \cite[Remark 4.2]{BGS20}.
	\end{proof}
	
	We shall also need the following.
	
\begin{lem}[{\cite[Lemma 14.3]{Rob11}}]\label{lem: key_ineq_inter}
	Let $L : \R^m \to \R^k$ be a linear transformation. Assume that the $p$-th singular value $\sigma_p(L)$ of $L$ is positive for some $p \in \{1, \ldots, k\}$. Then for every $z \in \R^k$ and $r, \eps > 0$,
	\[ \frac{\Leb(\{ \alpha \in B_m(0, r) : \|L \alpha + z \| \leq \eps \})}{\Leb (B_m(0, r))} \leq C_{m,k} \Big(\frac{\eps}{\sigma_p(L) \, r}\Big)^p, \]
	where $C_{m,k} > 0$ is a constant depending only on $m,k$ and $\Leb$ is the Lebesgue measure on $\R^m$.
\end{lem}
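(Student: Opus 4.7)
The plan is to reduce the problem to a product-type estimate via the singular value decomposition, after which the bound becomes a direct volume computation on a thin slab intersected with a ball.

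First I would take an SVD of the form $L = U \Sigma V^t$, where $V$ is an orthogonal transformation of $\R^m$, $U$ is an orthogonal transformation of $\R^k$, and $\Sigma \in \LIN(\R^m, \R^k)$ is the ``diagonal'' operator with entries $\sigma_1(L) \geq \sigma_2(L) \geq \ldots \geq \sigma_{\min(m,k)}(L) \geq 0$. Since $U$ is an isometry on $\R^k$,
\[ \|L\alpha + z\| = \|\Sigma V^t \alpha + U^t z\|. \]
Performing the orthogonal change of variable $\beta = V^t \alpha$ preserves both $B_m(0,r)$ and $\Leb$, so setting $w = U^t z \in \R^k$ reduces the problem to bounding
\[ \Leb\bigl(\{ \beta \in B_m(0,r) : \|\Sigma \beta + w\| \leq \eps \}\bigr). \]

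Next I would exploit the diagonal structure. If $\|\Sigma \beta + w\| \leq \eps$, then in particular each of the first $p$ coordinates of $\Sigma \beta + w$ satisfies $|\sigma_i(L) \beta_i + w_i| \leq \eps$ for $i = 1,\ldots, p$. Since $\sigma_i(L) \geq \sigma_p(L) > 0$ for $i \leq p$, this forces $\beta_i$ to lie in an interval of length at most $2\eps/\sigma_p(L)$. Thus the set in question is contained in
\[ \Bigl(\prod_{i=1}^p I_i \Bigr) \times \pi\bigl(B_m(0,r)\bigr), \]
where each $I_i \subset \R$ has length $\leq 2\eps/\sigma_p(L)$ and $\pi : \R^m \to \R^{m-p}$ is the projection onto the last $m-p$ coordinates. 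The projection of $B_m(0,r)$ onto any coordinate subspace of dimension $m-p$ is contained in a ball of radius $r$, whose Lebesgue measure is $c_{m-p} r^{m-p}$ for a universal constant.

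Combining these two bounds by Fubini yields
\[ \Leb\bigl(\{ \beta \in B_m(0,r) : \|\Sigma \beta + w\| \leq \eps \}\bigr) \leq \Bigl(\frac{2\eps}{\sigma_p(L)}\Bigr)^p c_{m-p} r^{m-p}. \]
Dividing by $\Leb(B_m(0,r)) = c_m r^m$ produces the desired bound with $C_{m,k} = 2^p c_{m-p}/c_m$, which depends only on $m$ and $k$ (via $p \leq k$). I don't see a real obstacle here; the only subtlety is being consistent about the shape of $\Sigma$ when $m \neq k$ and checking that the constant depends only on $m$ and $k$, both of which are handled by noting $p \in \{1, \ldots, k\}$ and that the projection dimension $m-p$ is bounded in terms of $m, k$.
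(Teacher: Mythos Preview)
The paper does not give its own proof of this lemma; it is simply quoted from \cite[Lemma 14.3]{Rob11}. Your argument via the singular value decomposition is correct and is essentially the standard proof (and, in fact, the one given in Robinson's book): reduce to the diagonal case, observe that the first $p$ coordinates of $\beta$ are each confined to an interval of length at most $2\eps/\sigma_p(L)$, and bound the remaining coordinates by the projection of $B_m(0,r)$.

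One small cosmetic point: your displayed constant $C_{m,k} = 2^p c_{m-p}/c_m$ still carries $p$; to get a constant depending only on $m$ and $k$ you should take the maximum over the finitely many admissible $p \in \{1,\ldots,\min(m,k)\}$ (note that $\sigma_p(L)>0$ forces $p \le m$ as well). You essentially say this in words, but it is cleaner to write it explicitly.
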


	\begin{proof}[{\bf Proof of Proposition \ref{prop: takens injective at point}}]
	By Fubini's theorem, it suffices to fix arbitrary $x \in X \setminus \bigcup \limits_{p=1}^{k-1} \Per_p(T)$ and show that for Lebesgue almost every $\alpha \in \R^m$, for every $\eps>0$ there exists $\delta = \delta(x,\alpha, \eps)>0$ such that \eqref{eq: continuity} holds. For that, it is enough to prove that for almost every $\alpha \in \R^m$
	\begin{equation}\label{eq: injectivity} \text{for every } y \in X, \text{ if } \phi_\alpha(x) = \phi_\alpha(y), \text{ then } x = y.
	\end{equation}
	Indeed, if $\alpha \in \R^m$ is such that \eqref{eq: injectivity} holds and there exists $\eps > 0$ such that for every $n \in \N$ there exists $y_n \in X$ with $\|\phi_\alpha(x) - \phi_\alpha(y_n)\| < 1/n$ and $\|x - y\| \geq \eps$, then by passing to the limit of a convergent subsequence of $(y_n)_{n=1}^\infty$ (recall that we assume $X$ to be compact) we see that there exists $y \in X$ with $\phi_\alpha(x) = \phi_\alpha(y)$ and $\| x -y  \| \geq \eps$ (by continuity of $\phi_\alpha$), contradicting $\eqref{eq: injectivity}$.
	
	The proof of \eqref{eq: injectivity} is essentially the same as the proof of \cite[Theorem 4.3]{BGS20}, with minor adjustments.\footnote{one has to note that in our case we can take $\tilde{X} = X$ and then the proof of $\eta_m(A_x) = 0$ follows in a similar manner as in \cite{BGS20} with the use of Lemmas \ref{lem: rank} and \ref{lem: key_ineq_inter}.} We provide details for the convenience of the reader.
	
	Fix arbitrary $r > 0$. We shall prove that
	\begin{equation}\label{eq: rho goal}
		\Leb \left( \left\{ \alpha \in B_m(0, r) : \underset{ y \in X \setminus \{x\}}{\exists}\ \phi_\alpha(x) = \phi_\alpha (y) \right\} \right) = 0.
	\end{equation}
	
	By injectivty of $T$ we see that the set $E := \Orb(x) \setminus \{ x\}$ is at most countable. Lemma \ref{lem: rank}.\ref{it: rank 1} gives that $\rank(D_{x,y}) \geq 1$ for $y \in E$, hence $D_{x,y} \neq 0$. Therefore by \eqref{eq:matrix_form} we have
	\begin{equation}\label{eq: bound on E}
	\begin{split}	\Leb & \left( \left\{ \alpha \in B_m(0, r) : \underset{ y \in E}{\exists}\ \phi_\alpha(x) = \phi_\alpha (y) \right\} \right) \\
	& = \sum \limits_{y \in E} \Leb \left( \left\{ \alpha \in B_m(0, r) : D_{x,y}\alpha = -w_{x,y} \right\} \right) \\
	& = 0,
	\end{split}
	\end{equation}
	as each set $\{ \alpha \in \R^m : D_{x,y}\alpha = -w_{x,y}\}$ is a proper affine subspace of $\R^m$. Note further that $x \in \Orb(y)$ if and only if $y \in \Orb(x)$ and hence Lemma \ref{lem: rank}.\ref{it: rank k} gives that $\rank(D_{x,y}) = k$ for $y \in F:= X \setminus \Orb(x)$, hence also $\sigma_k(D_{x,y}) > 0$ for $y \in F$. Therefore
	\begin{equation}\label{eq: F decomp} F = \bigcup \limits_{n=1}^\infty F_n, \text{ where } F_n:= \{ y \in F : \sigma_k(D_{x,y}) \geq 1/n\}.
	\end{equation}
	Fix arbitrarily small $\eps >0$ and consider a countable cover $F_n \subset \bigcup \limits_{i=1}^\infty B(y_i, \eps_i)$, where $y_i \in F_n$ and $\sum \limits_{i=1}^\infty \eps_i^k < \eps$. Such a cover exists as $\mH^k(F_n) \leq \mH^k(X) = 0$ by assumption. As $T$ and $h, h_1, \ldots, h_m$ are $C^r$ maps on a compact manifold, they are also Lipschitz and hence there exists $Q = Q(r, T, h, h_1, \ldots, h_m )$ such that
	\[ \text{ if } \phi_\alpha(x) = \phi_\alpha (y) \text{ for some } y \in F_n, \text{ then } \|\phi_\alpha(x) - \phi_\alpha (y_i)\| \leq Q\eps_i \text{ for some } i \in \N. \]
	Combining this with Lemma \ref{lem: key_ineq_inter} and \eqref{eq:matrix_form} gives
	\[
	\begin{split}
	\Leb & \left( \left\{ \alpha \in B_m(0, r) : \underset{ y \in F_n}{\exists}\ \phi_\alpha(x) = \phi_\alpha (y) \right\} \right) \\
	& \leq \sum \limits_{i=1}^\infty \Leb \left( \left\{ \alpha \in B_m(0, r) : \| \phi_\alpha(x) - \phi_\alpha (y_i)\| \leq Q \eps_i \right\} \right) \\
	& \leq \sum \limits_{i=1}^\infty \Leb \left( \left\{ \alpha \in B_m(0, r) : \| D_{x,y_i} \alpha + w_{x,y_i} \| \leq Q\eps_i \right\} \right) \\
	& \leq  C_{m,k} \Leb (B_m(0, r)) r^{-k}n^k Q^k \sum \limits_{i=1}^\infty \eps_i^k \\
	& \leq C_{m,k} \Leb (B_m(0, r)) r^{-k}n^k Q^k \eps.
	\end{split}
	\]
	As $\eps>0$ can be chosen arbitrarily small, we see that
	\[ \Leb \left( \left\{ \alpha \in B_m(0, r) : \underset{ y \in F_n}{\exists}\ \phi_\alpha(x) = \phi_\alpha (y) \right\} \right) = 0,\]
	hence by \eqref{eq: F decomp}
	\[\Leb \left( \left\{ \alpha \in B_m(0, r) : \underset{ y \in F}{\exists}\ \phi_\alpha(x) = \phi_\alpha (y) \right\} \right) = 0.\]
	Since $X \setminus \{x\} \subset E \cup F$, combining this with \eqref{eq: bound on E} yields \eqref{eq: rho goal}. As $r > 0$ can be taken arbitrarily large, we conclude that \eqref{eq: injectivity} holds for Lebesgue almost every $\alpha \in \R^m$. As already discussed, this finishes the proof.
	\end{proof}
	
	\subsection{Almost sure local immersion} Now we turn to the time-delayed version of Fact 2.
	
	\begin{prop}\label{thm: takens diff at point}
	Assume that  $k \geq \dim M$. Then for almost every $\alpha \in \R^m$, the following holds for $\mu$-a.e. $x \in M \setminus \bigcup \limits_{p=1}^{k-1} \Per_p (T)$: there exists an open neighbourhood $U$ of $x$ in $M$ such that $\phi_{\alpha}$ restricted to $U$ is a $C^r$ diffeomorphism onto its image.
	\end{prop}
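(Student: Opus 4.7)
\bigskip

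\noindent\textbf{Proof plan for Proposition \ref{thm: takens diff at point}.} By Fubini's theorem and Lemma \ref{lem: immersion}, it suffices to fix an arbitrary $x \in M \setminus \bigcup_{p=1}^{k-1} \Per_p(T)$ and prove that for Lebesgue almost every $\alpha \in \R^m$, the differential $D_x \phi_\alpha : T_x M \to \R^k$ has rank equal to $\dim M$ (so that $\phi_\alpha$ is an immersion at $x$). Write $y_i = T^i x$ for $i = 0, 1, \ldots, k-1$; by the assumption on $x$ these are $k$ pairwise distinct points of $M \subset \R^N$. For each $i$, the $i$-th row of the matrix of $D_x \phi_\alpha$ (in any linear basis of $T_x M$) equals $D_{y_i} h_\alpha \circ D_x T^i$, where $D_x T^i : T_x M \to T_{y_i} M$ is a fixed linear isomorphism.

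The key step is to prove that the affine map $\Psi_x : \R^m \to \LIN(T_x M, \R^k)$ defined by $\alpha \mapsto D_x \phi_\alpha$ has image of full dimension $k \cdot \dim M$. For this I would use the interpolation property of polynomials of degree at most $2k-1$ on $N$ variables at the distinct points $y_0, \ldots, y_{k-1}$: for each fixed index $i$ and each linear functional $\ell$ on $\R^N$, the polynomial
\[ p_{i,\ell}(z) = \frac{\ell(z - y_i)}{\prod_{j \neq i} \|y_i - y_j\|^2} \prod_{j \neq i} \|z - y_j\|^2 \]
has degree at most $2k-1$, vanishes to second order at each $y_j$ with $j \neq i$ (so $D_{y_j} p_{i,\ell} = 0$ for $j \neq i$), and satisfies $D_{y_i} p_{i,\ell} = \ell$. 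Since $\{h_j\}_{j=1}^m$ is a basis of all such polynomials, and since every element of $(T_{y_i} M)^*$ is the restriction of some linear $\ell$ on $\R^N$, taking suitable linear combinations of the $p_{i,\ell}$'s shows that every tuple $(L_0, \ldots, L_{k-1}) \in \prod_{i=0}^{k-1} (T_{y_i}M)^*$ is realized as $(D_{y_i} h_\alpha|_{T_{y_i}M})_{i=0}^{k-1}$ for some $\alpha \in \R^m$. Composing with the fixed isomorphisms $D_x T^i$, this shows that the affine map $\Psi_x$ is surjective onto $\LIN(T_x M, \R^k)$.

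Once surjectivity of $\Psi_x$ is established, the conclusion follows: Lemma \ref{lem: full rank} (applied with $V = T_x M$ and $W = \R^k$, using $\dim T_x M = \dim M \leq k$) shows that the set of $L \in \LIN(T_x M, \R^k)$ with $\rank L < \dim M$ has Lebesgue measure zero. As $\Psi_x$ is an affine surjection from $\R^m$ onto a finite-dimensional space, the preimage of any Lebesgue-null set is Lebesgue-null in $\R^m$. Hence for a.e. $\alpha$, $D_x \phi_\alpha$ has rank $\dim M$, and Lemma \ref{lem: immersion} yields an open neighbourhood $U$ of $x$ in $M$ on which $\phi_\alpha$ is a $C^r$ diffeomorphism onto its image.

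The main technical hurdle is the interpolation step (the construction and verification of $\Psi_x$'s surjectivity); everything else is a straightforward combination of Fubini, Lemma \ref{lem: full rank}, and Lemma \ref{lem: immersion}, paralleling the derivation of Fact 2 in the proof of Theorem \ref{thm: projections detail}. The non-periodicity of $x$ is used precisely to guarantee that $y_0, \ldots, y_{k-1}$ are distinct, so that the interpolating polynomials $p_{i,\ell}$ make sense.
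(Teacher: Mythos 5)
Your proposal is correct and follows essentially the same route as the paper: reduce via Fubini and Lemma \ref{lem: immersion} to showing $D_x\phi_\alpha$ has full rank for a.e.\ $\alpha$, prove that the affine map $\alpha\mapsto D_x\phi_\alpha$ surjects onto $\LIN(T_xM,\R^k)$ using interpolation of first derivatives at the distinct points $x, Tx, \ldots, T^{k-1}x$ and the chain rule to undo the isomorphisms $D_xT^i$, and then invoke Lemma \ref{lem: full rank}. The only difference is cosmetic: the paper cites \cite[Lemma 4.1]{SYC91} (stated here as Lemma \ref{lem: RN interpolation}, then lifted to manifolds in Corollary \ref{cor: manifold interpolation}) for the interpolation step, whereas you reconstruct the interpolating polynomial explicitly via $p_{i,\ell}(z) = \ell(z-y_i)\prod_{j\neq i}\|z-y_j\|^2/\prod_{j\neq i}\|y_i-y_j\|^2$, which is a correct and self-contained proof of the same fact with the same degree bound $2k-1$.
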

	
	For the proof we will need some tools from the interpolation theory.
	\begin{lem}[{\cite[Lemma 4.1]{SYC91}}]\label{lem: RN interpolation}
		For every collection $z_1, z_2, \ldots z_k$ of $k$ distinct points in $\R^N$ and vectors $u_1, \ldots, u_k \in \R^N$ there exists an $N$-variate polynomial $p$ of degree at most $k$ such that
		\[ \nabla p(z_i) = u_i \text{ for every } i \in \{1, \ldots, k\}.\]
	\end{lem}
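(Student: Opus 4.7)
The plan is to follow the classical Lagrange-type gradient interpolation construction, which uses squared bump polynomials to zero out both values and first derivatives away from the intended interpolation node. The construction is elementary; the only thing to track carefully is the final degree.

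First, for each pair $i \neq j$ in $\{1,\ldots,k\}$, since $z_i \neq z_j$, I can pick an affine functional $L_{ij} : \R^N \to \R$ with $L_{ij}(z_j) = 0$ and $L_{ij}(z_i) = 1$ (take any hyperplane through $z_j$ missing $z_i$, rescaled appropriately). Setting
$$\ell_i(x) \;:=\; \prod_{j \neq i} L_{ij}(x)$$
gives a polynomial of degree exactly $k-1$ satisfying $\ell_i(z_j) = \delta_{ij}$.

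Plain value interpolation is not enough, because an affine factor multiplied against $\ell_i$ typically leaves a nonzero gradient contribution at the other $z_j$. The fix is to square $\ell_i$, so that $\ell_i(z_j) = 0$ for $j \neq i$ forces both $\ell_i^2$ and $\nabla(\ell_i^2) = 2\ell_i\nabla\ell_i$ to vanish there. With this in mind I would define
$$p(x) \;:=\; \sum_{i=1}^{k} \ell_i(x)^2 \bigl(u_i \cdot (x - z_i)\bigr),$$
which is an $N$-variate polynomial of total degree at most $2(k-1)+1 = 2k-1$.

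To verify the gradient conditions, I fix $j$ and apply the product rule to each summand:
$$\nabla\!\left[\ell_i(x)^2 \, \bigl(u_i \cdot (x - z_i)\bigr)\right] \;=\; 2\,\ell_i(x)\bigl(u_i \cdot (x - z_i)\bigr)\nabla \ell_i(x) \;+\; \ell_i(x)^2 \, u_i.$$
Evaluated at $x = z_j$ with $i \neq j$, both terms vanish because $\ell_i(z_j) = 0$. At $x = z_j$ with $i = j$, the first term vanishes since $u_j \cdot (z_j - z_j) = 0$, while the second equals $\ell_j(z_j)^2 u_j = u_j$. Summing over $i$ gives $\nabla p(z_j) = u_j$, as required.

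There is no real obstacle; this is a clean Lagrange-style argument. The only cautionary point is bookkeeping on the degree: the construction naturally produces a polynomial of degree at most $2k-1$, which is precisely the bound used for the polynomial basis $h_1,\ldots,h_m$ in Theorem \ref{thm: takens detail}, and is therefore what subsequent applications actually need.
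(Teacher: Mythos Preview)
Your construction is correct and the gradient computation is clean, but---as you yourself flag---it only yields degree at most $2k-1$, whereas the lemma claims degree at most $k$. So you have established a weaker statement, not the lemma as written.

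The paper gives no proof of its own here; it simply cites \cite[Lemma~4.1]{SYC91}. One way to reach the sharper bound $k$ is to first make a generic invertible linear change of coordinates so that the $k$ points have pairwise distinct $m$-th coordinates for every $m=1,\ldots,N$; then for each $m$ solve the one-variable problem $q_m'((z_i)_m)=(u_i)_m$ by ordinary Lagrange interpolation of $q_m'$ (degree $\leq k-1$) followed by antidifferentiation, and set $p(x)=\sum_{m=1}^N q_m(x_m)$. This has degree $\leq k$ and the required gradients, and undoing the linear change of variables preserves the degree.

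Your closing observation is nonetheless accurate: the lemma is invoked only through Corollary~\ref{cor: manifold interpolation} in the proof of Proposition~\ref{thm: takens diff at point}, and there one merely needs the interpolating polynomial to lie in the span of $h_1,\ldots,h_m$, i.e.\ to have degree at most $2k-1$. So while you have not proved the lemma as stated, your argument does supply everything the paper actually uses.
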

	
	\begin{cor}\label{cor: manifold interpolation}
	Fix distinct points $z_1, \ldots, z_k \in M$ and linear maps $L_i \in \Lin(T_{z_i}M, \R),\ i =1, \ldots, k$. Then, there exists an $N$-variate polynomial $p$ of degree at most $k$ such that
	\[ D_{z_i}(p|_M) = L_i \text{ for every } i \in \{1, \ldots, k \}.\]
	\end{cor}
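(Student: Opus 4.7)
The plan is to reduce the corollary directly to Lemma \ref{lem: RN interpolation} by extending each linear functional $L_i$ on the tangent space $T_{z_i}M$ to a linear functional on the ambient $\R^N$, and then applying the ambient interpolation result.

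First I would exploit the fact that $M$ is embedded in $\R^N$, so each tangent space $T_{z_i}M$ is a linear subspace of $\R^N$. By a standard Hahn--Banach (in fact, just finite-dimensional linear algebra) argument, every linear map $L_i \in \Lin(T_{z_i}M, \R)$ extends to a linear map $\widetilde L_i \in \Lin(\R^N, \R)$, which can be written uniquely as $\widetilde L_i(v) = \langle u_i, v \rangle$ for some $u_i \in \R^N$.

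Next I would apply Lemma \ref{lem: RN interpolation} to the distinct points $z_1, \ldots, z_k \in \R^N$ and the vectors $u_1, \ldots, u_k$ just produced, obtaining an $N$-variate polynomial $p$ of degree at most $k$ with $\nabla p(z_i) = u_i$ for each $i$. Finally I would verify the conclusion: for any tangent vector $v \in T_{z_i}M$, viewed as an element of $\R^N$, the chain rule gives
\[ D_{z_i}(p|_M)(v) = D_{z_i} p (v) = \langle \nabla p(z_i), v \rangle = \langle u_i, v \rangle = \widetilde L_i(v) = L_i(v), \]
where the last equality uses that $\widetilde L_i$ extends $L_i$ on $T_{z_i}M$. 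Hence $D_{z_i}(p|_M) = L_i$, as required.

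There is essentially no obstacle here: the only content of the proof is recognizing that an ambient polynomial interpolation statement (Lemma \ref{lem: RN interpolation}) automatically implies the tangential interpolation statement on a submanifold, because prescribing a gradient in $\R^N$ at a point is stronger than prescribing its restriction to any subspace of $\R^N$. The only mildly non-trivial observation is that the extension of $L_i$ to $\R^N$ is non-unique, but this freedom is harmless since we only use the values of the extended functional on $T_{z_i}M$.
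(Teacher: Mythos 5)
Your proof is correct and follows essentially the same route as the paper's: identify $T_{z_i}M$ with a subspace of $\R^N$, extend each $L_i$ to a linear functional on $\R^N$ represented by a vector $u_i$, apply Lemma \ref{lem: RN interpolation} to get $\nabla p(z_i)=u_i$, and restrict back to the tangent space. (You also correctly use degree at most $k$ as in the statement; the paper's proof contains an inconsequential typo where it writes $2k-1$ at this step.)
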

	
	\begin{proof} As $M$ is an embedded $C^r$ submanifold in $\R^N$, we can identify each $T_{z_i} M$ with a $\dim M$-dimensional linear subspace of $\R^N$. Then for all $C^1$ functions $f : \R^N \to \R$ one has
		\[ D_{z_i}(f|_M) = (D_{z_i}f)|_{T_{z_i} M} \]
		(here $D_{z_i}(f|_M) \in \LIN(T_{z_i} M, \R)$ is the differential of $f|_M$, while $(D_{z_i}f)|_{T_{z_i} M} \in \LIN(T_{z_i} M, \R)$ is the restriction to $T_{z_i} M$ of the differential $D_{z_i} f \in \LIN(\R^N, \R)$). Therefore, it suffices to show that there exists a polynomial $p : \R^N \to \R$ of degree at most $2k-1$ such that
		\begin{equation}\label{eq: finding diff}
			(D_{z_i}p)|_{T_{z_i} M} = L_i.
		\end{equation}
		Take $u_i \in \R^N$ so that $\langle u_i, v \rangle = L_i v$ for all $v \in T_{z_i} M$ (e.g. one can extend $L_i \in \LIN(T_{z_i} M, \R)$ in an arbitrary fashion to $A_i \in \LIN(\R^N, \R)$ and take $u_i$ to be the preimage of $A_i$ under the canonical identification $\R^N \simeq \LIN(\R^N,\R)$ via $u \mapsto \langle u, \cdot \rangle$). Let $p$ be a polynomial as in Lemma \ref{lem: RN interpolation} satisfying $\nabla p(z_i) = u_i$. As $D_{z_i} p (v) = \langle \nabla p (z_i), v \rangle$ for all $v \in T_{z_i}\R^N = \R^N$, we see that $p$ satisfies \eqref{eq: finding diff}.
		
	\end{proof}
	
	\begin{proof}[{\bf Proof of Proposition \ref{thm: takens diff at point}}]
	By Fubini's theorem it suffices to prove that for arbitrary fixed $x \in M \setminus \bigcup \limits_{p=1}^{k-1} \Per_p (T)$, for Lebesgue almost every $\alpha \in \R^m$ there exists an open neighbourhood $U$ of $x$ in $M$ such that $\phi_{h_{\alpha}}$ restricted to $U$ is a $C^r$ diffeomorphism onto its image. By Lemma \ref{lem: immersion} it suffices to prove that for almost every $\alpha \in \R^m$ the differential $D_x \phi_\alpha \in \LIN(T_x M, \R^k)$ is of full rank, i.e. $\rank(D_x \phi_\alpha) = d$, where $d = \dim M$. Consider a linear map $\Psi : \R^m \to \LIN(T_x M, \R^k)$ given by $\Psi \alpha  = \sum \limits_{j=1}^m \alpha_j D_x \phi_{h_j}$. Note that $D_x \phi_\alpha = D_x \phi_h + \Psi \alpha$. Therefore our goal is to show that for almost every $\alpha \in \R^m$ it holds that $\Psi \alpha \in G := \{ L \in \LIN(T_x M, \R^k) : \rank(D_x \phi_h + L) = d \}$. By Lemma \ref{lem: full rank}, $G$ is a full Lebesgue measure subset of the $dk$-dimensional linear space $\LIN(T_x M, \R^k)$. It therefore suffices to show that $\Psi$ is surjective, as then $\Psi\Leb_m$ is absolutely continuous with respect to the Lebesgue measure on $\LIN(T_x M, \R^k)$ and hence $\Leb_m(\Psi^{-1}(G^c)) = 0$. Fix arbitrary $L \in \LIN(T_x M, \R^k)$. Our goal is to prove that there exists an $N$-variate polynomial $p$ of degree at most $2k-1$ such that $D_x \phi_p = L$ (as then there exists $\alpha \in \R^m$ such that $p = \sum \limits_{j=1}^m \alpha_j h_j$ and hence $\Psi \alpha  = D_x \phi_p = L$). Note that by the chain rule
	\begin{equation}\label{eq: phi diff chain}
	\begin{split} D_x \phi_p & = (D_x ( p|_M ), D_x (p|_M \circ T), \ldots, D_x (p|_M \circ T^{k-1})) \\
	& = (D_x (p|_M), D_{Tx} (p|_M) \circ D_x T, \ldots, D_{T^{k-1}x} (p|_M) \circ D_{T^{k-2} x} T \circ \cdots \circ D_x T).
	\end{split}
	\end{equation}
	Write $L = (L_1, \ldots, L_k)$ with $L_1, \ldots, L_k \in \LIN(T_x M, \R)$. As $x \notin \bigcup \limits_{p=1}^{k-1} \Per_p (T)$, we can invoke Corollary \ref{cor: manifold interpolation} to conclude that here exists a polynomial $p$ of degree at most $2k-1$ such that
	\[ D_{T^i x} (p|_M) = L_i \circ (D_{T^{i-1}x}T \circ \cdots \circ D_x T)^{-1} \text{ for } i =0, \ldots, k-1.\]
	Combining this with \eqref{eq: phi diff chain} proves that $D_x \phi_p = L$. As $L$ was arbitrary we see that $\Psi$ is surjective, concluding the proof.
	\end{proof}
	
	\begin{proof}[{\bf Proof of Theorem \ref{thm: takens detail}}]
	The proof follows in exactly the same way as the proof of Theorem \ref{thm: projections detail}, with maps $\phi_L$ replaced by $\phi_\alpha$, Fact 1 replaced by Proposition \ref{prop: takens injective at point} and Fact 2 replaced by Proposition \ref{thm: takens diff at point}. Note that we use here the assumption $\mu\left( \bigcup \limits_{p=1}^{k-1} \Per_p(T) \right) = 0$.
	\end{proof}
	
	\section{Applications}\label{sec: applications}
	
	\subsection{Bounds on the prediction errors}\label{subsec: pred errors}
	
	The idea that one can reduce the embedding dimension twice with respect to classical Takens theorem and still obtain statistically reliable model of the system goes back to the work of Schroer, Sauer, Ott and Yorke \cite{SSOY98}. Instead of studying directly the almost-sure properties of delay-coordinate maps, they have defined certain prediction errors and formulated conjectures on their decay rate. More precisely, given a measure $\mu$ on $X$ and an observable $h : X \to \R$ with the corresponding delay-coordinate map $\phi_{h,k} : X \to \R^k$, one defines for $y \in \supp (\phi_{h,k}\mu)$ and $\eps>0$
	\begin{align*}
		\chi_{h,\eps}(y) &=  \frac{1}{\mu(\phi_{h,k}^{-1}(B(y, \eps)))} \int \limits_{\phi_{h,k}^{-1}(B(y, \eps))} \phi_{h,k}\circ T \: d\mu,\\
		\sigma_{h,\eps}(y) &= 
		\bigg(\frac{1}{\mu(\phi_{h,k}^{-1}(B(y, \eps)))} \int \limits_{\phi_{h,k}^{-1}(B(y, \eps))} \|\phi_{h,k} \circ T -\chi_{h,\eps}(y)\|^2d\mu\bigg)^{\frac{1}{2}}.
	\end{align*}
	The quantity $\sigma_{h,\eps}(y)$ measures the mean-square error of the prediction of the one-step future $\phi_{h,k}(Tx)$ if the current state $y = \phi_{h,k}(x)$ is given with a random noise (governed by $\mu$) up to precision $\eps$. The \textbf{prediction error} is defined as the limit $\sigma_h(y) = \lim \limits_{\eps \to 0} \sigma_{h,\eps}(y)$, provided that the limit exists. It was proven in \cite[Theorem 1.17]{BGSPredict} that $\sigma_h(y) = 0$ for $\phi_{h,k}\mu$-a.e. $y$ if and only if $h$ is almost surely deterministically $k$-predictable, i.e. there exists a full $\mu$-measure set $X_h$ such that $\phi_{h,k}(x) = \phi_{h,k}(y)$ implies $\phi_{h,k}(Tx) = \phi_{h,k}(Ty)$. Note that it follows from Theorem \ref{thm: takens main} that this is the case for a prevalent $C^r$ observable provided that $k > \hdim X$ and $k \geq \dim M$, as then $\phi_{h,k}$ is injective on a full-measure set. In fact, weaker assumptions suffice: \cite[Theorem 1.21]{BGSPredict} proves that that the same holds for if $k > \hdim \mu$. The \textit{Schorer-Sauer-Ott-Yorke predicition error conjecture} \cite{SSOY98} postulates more precise bounds on the decay rate for the convergence $\lim \limits_{\eps \to 0} \sigma_{h,\eps}(y) = 0$ in probability. After suitable modifications, these conjectures were essentially established in a series of works \cite{BGS22, BGSPredict, BGSLimits}, see therein for details and see \cite[Theorem 1.16]{BGSLimits} for the summary of the results on the above conjecture. In particular, in the case $k > \udim X$ the bound
	\[ \mu\left( \left\{ x \in X : \sigma_{h,\eps}(\phi_{h,k}(x)) > \delta \right\} \right) \leq C_{\delta, \theta} \eps^{k - \udim(X) - \theta} \]
	was obtained for prevalent Lipschitz observables $h$ and every $\delta, \theta > 0$. With the aid of Theorem \ref{thm: takens main} we can improve this by obtaining almost sure pointwise (rather than in probability) bounds on $\sigma_{h,\eps}(\phi_{h,k}(x))$.
	
	\begin{thm}\label{thm: prediction error}
		Let $M$ be a compact, Riemannian $C^r$ manifold. Let $T : M \to M$ be a $C^r$ diffeomorphism. Let $X \subset M$ be a compact set and $\mu$ be a Borel probability measure on $X$. Fix $k \in \N$ such that $\mH^k(X) = 0$ and $k \geq \dim M$ and assume that $\mu \left( \bigcup \limits_{p=1}^{k-1} \Per_p(T) \right) = 0$. The following hold for a prevalent $C^r$ observable $h : M \to \R$: for $\mu$-a.e. $x \in X$ there exists $C=C(x,h)$ such that for all $\eps>0$
		\begin{equation}\label{eq: sigma pointwise bound}
			\sigma_{h,\eps}(\phi_{h,k}(x)) \leq C\eps.
		\end{equation}
	\end{thm}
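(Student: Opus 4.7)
The plan is to deduce the bound \eqref{eq: sigma pointwise bound} directly from the pointwise bi-Lipschitz property of $\phi_{h,k}^{-1}$ established in Theorem \ref{thm: takens main}\eqref{it: takens main hdim}, combined with the global Lipschitz continuity of $T$ and $\phi_{h,k}$ on the compact manifold $M$.

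First, fix a prevalent set of $C^r$ observables $h$ for which Theorem \ref{thm: takens main}\eqref{it: takens main hdim} applies; note that under our hypotheses $\mH^k(X) = 0$ is equivalent to (and in fact stronger than) the assumption $\hdim X < k$ used in the cited theorem. For such $h$, for $\mu$-a.e.\ $x \in X$ there exists $C_1 = C_1(x,h) \in [0,\infty)$ with
\[ \rho(x,y) \leq C_1 \|\phi_{h,k}(x) - \phi_{h,k}(y)\| \quad \text{for every } y \in X. \]
Since $T$ is a $C^r$ diffeomorphism of the compact manifold $M$, it is globally Lipschitz with respect to $\rho$, say with constant $L_T$. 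Similarly, each coordinate $h \circ T^i$ of $\phi_{h,k}$ is $C^r$ on $M$, so $\phi_{h,k} : (M,\rho) \to \R^k$ is Lipschitz with some constant $L_\phi$. Hence $\phi_{h,k} \circ T$ is Lipschitz with constant $L_\phi L_T$ on $M$.

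Now fix such an $x$, set $y = \phi_{h,k}(x)$ and let $\eps > 0$. For every $z \in \phi_{h,k}^{-1}(B(y,\eps))$ the bi-Lipschitz bound gives $\rho(x,z) \leq C_1 \eps$, so
\[ \|\phi_{h,k}(Tz) - \phi_{h,k}(Tx)\| \leq L_\phi L_T \rho(x,z) \leq L_\phi L_T C_1 \eps. \]
The key observation is that the variance of a random variable is bounded above by its second moment about any fixed point. Applying this with the constant vector $\phi_{h,k}(Tx)$ and the conditional probability measure $\mu_{y,\eps} := \mu(\phi_{h,k}^{-1}(B(y,\eps)))^{-1} \mu|_{\phi_{h,k}^{-1}(B(y,\eps))}$, we obtain
\[ \sigma_{h,\eps}(y)^2 \leq \int \|\phi_{h,k}(Tz) - \phi_{h,k}(Tx)\|^2 \, d\mu_{y,\eps}(z) \leq (L_\phi L_T C_1 \eps)^2, \]
so \eqref{eq: sigma pointwise bound} holds with $C = L_\phi L_T C_1(x,h)$. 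No significant obstacle is anticipated: once Theorem \ref{thm: takens main} is available, the argument is essentially a one-line variance estimate, and the only care needed is to ensure that the chosen prevalent set of observables is the same one given by Theorem \ref{thm: takens main} and that the $\mu$-null exceptional set coming from that theorem is absorbed into the exceptional set here.
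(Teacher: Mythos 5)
Your proof is correct and follows essentially the same approach as the paper: both invoke the pointwise bi-Lipschitz bound from Theorem \ref{thm: takens main}\eqref{it: takens main hdim} to show that the preimage $X \cap \phi_{h,k}^{-1}(B(\phi_{h,k}(x),\eps))$ is contained in a ball of radius $C\eps$ around $x$, and then use the Lipschitz continuity of $\phi_{h,k}\circ T$. The only difference is in the last step: the paper uses the triangle inequality to bound $\sigma_{h,\eps}$ by $\|\chi_{h,\eps}(\phi_{h,k}(x)) - \phi_{h,k}(Tx)\|$ plus the square root of the second moment about $\phi_{h,k}(Tx)$ (yielding constant $2C\Lip(\phi_{h,k}\circ T)$), whereas you invoke the variance-minimization identity $\E\|Z-\E Z\|^2\le\E\|Z-c\|^2$ directly, which is slightly cleaner and halves the constant.
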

	
	\begin{proof}
	By Remark \ref{rem: embed} can assume that $M$ is embedded into $\R^N$. Applying Theorem \ref{thm: takens detail} we obtain that a prevalent $C^r$ observable $h : M \to \R$ is such that for $\mu$-a.e. $x \in X$ there exists $C=C(x,h)$ with
		\begin{equation}\label{eq: takens biLip2} \| x- y \| \leq C \|\phi_{h,k}(x) - \phi_{h,k}(y)\| \text{ for every } y \in X,
	\end{equation}
	Fix $h$ and $x$ such that the above holds. We shall prove that \eqref{eq: sigma pointwise bound} holds at $x$. Note first that \eqref{eq: takens biLip2} gives that
	\[X \cap \phi_{h,k}^{-1}(B(\phi_{h,k}(x),\eps)) \subset B(x, C\eps).\]
	As $\phi_{h,k}$ and $T$ are Lipschitz, we have therefore
	\[ \|\chi_{h,\eps}(\phi_{h,k}(x)) - \phi_{h,k}(Tx)\| \leq C\Lip(\phi_{h,k} \circ T)\eps \]
	and hence similarly
	\[
	\begin{split}
		\sigma_{h,\eps}(\phi_{h,k}(x)) & \leq \|\chi_{h,\eps}(\phi_{h,k}(x)) - \phi_{h,k}(Tx)\| \\
		& \quad + \bigg(\frac{1}{\mu(\phi_{h,k}^{-1}(B(\phi_{h,k}(x), \eps)))} \int \limits_{\phi_{h,k}^{-1}(B(\phi_{h,k}(x), \eps))} \|\phi_{h,k} \circ T -\phi_{h,k}(Tx)\|^2d\mu\bigg)^{\frac{1}{2}} \\
		& \leq 2 C \Lip(\phi_{h,k} \circ T)\eps.
	\end{split}
	\]
	\end{proof}
	
	\subsection{Preservation of the Lyapunov exponents}\label{sec: Lyap exp}
	
	Let $T:M \to M$ be a $C^r$ diffeomorphism of a compact, Riemannian $C^r$ manifold $M$. Let $\mu$ be a $T$-invariant and ergodic Borel probability measure on $M$. Then there exist $\ell \in \N$, numbers $d_1, \ldots, d_\ell \in \N$ such that $\sum \limits_{i=1}^\ell d_i = \dim M$ and real numbers $\chi_1 < \ldots < \chi_\ell$ such that for $\mu$-a.e. $x \in M$ there exists a decomposition
	\begin{equation}\label{eq: lyap splitting} T_x M = \bigoplus \limits_{i=1}^\ell H_i(x)
	\end{equation}
	into linear subspaces $H_i(x)$ satisfying $\dim H_i(x) = d_i$ and such that
	\begin{equation}\label{eq: lyap conv} \lim \limits_{n \to \infty} \frac{1}{n} \log \frac{\|D_x(T^n) v\|}{\|v\|}  = \chi_i \text{ uniformly in } v \in H_i(x) \setminus \{0\}, \text{ for } i = 1, \ldots, \ell.
	\end{equation}
	This is a consequence of the Oseledets' multiplicative ergodic theorem, see e.g. \cite[Supplement Section 2]{KatokHasselblatt} for more details. The numbers $\chi_1, \ldots, \chi_\ell$ are called the \textbf{Lyapunov exponents} of $\mu$ and $d_1, \ldots, d_\ell$ are their multiplicities \cite{BarreiraPesinLyapSmooth}. Lyapunov exponents describe the expansion in the system, quantifying its chaotic behaviour. Estimating them from a time series is an important problem in applications \cite{ER85, EKORCLyapTimeSeries} with its algorithmic aspects being studied mathematically \cite{MeraMoranERAlg}.
	
	As an application of Theorem \ref{thm: takens main}.\eqref{it: takens main dim} we can prove that if $k > \dim M$, then a typical orbit of the observed dynamics on $\phi_{h,k}(M)$ will approximate the Lyapunov exponents of the original system $(M,T)$ with full frequency. To explain this more precisely, assume that $h : M \to R$ is such that point \eqref{it: takens main dim} of Theorem \ref{thm: takens main} holds. Then, while $\phi_{h,k}(M)$ in general will not be a manifold, it will be locally so at $\phi_{h,k}\mu$-a.e. $z \in \phi_{h,k}(M)$ in the sense that there exists a neighbourhood $V_z$ of $z$ in $\R^k$ such that $V_z \cap \phi_{h,k}(M) \subset \R^k$ is an embedded $\dim M$-dimensional $C^r$ submanifold in $\R^k$ (as by Theorem \ref{thm: takens main}, $\phi_{h,k}$ gives a local embedding of $M$ into $\R^k$ at $\mu$-a.e. $x \in M$, see \cite[Chapter 5]{LeeIntroSmoothManifolds}). Therefore, we can treat the tangent spaces $T_y (V_z \cap \phi_{h,k}(M))$ for $y \in V_z \cap \phi_{h,k}(M)$ as linear subspaces of $\R^k$. For $y=z$ the tangent space $T_z (V_z \cap \phi_{h,k}(M))$ does not depend on the choice of the neighbourhood $V_z$ (provided that $V_z$ is small enough), so we will denote it shortly by $T_z \phi_{h,k}(M)$. If we assume that $\mu$ is $T$-invariant, then there exists a $T$-invariant full $\mu$-measure Borel set $X_h \subset M$ such that the above can be carried out for every $z \in \phi_{h,k}(X_h)$ and $\phi_{h,k}$ is injective on $X_h$ (as every full-measure set contains a full-measure invariant subset). The inverse map $\phi_{h,k}^{-1} : \phi_{h,k}(X_h) \to X_h$ has the property that for every $x \in X_h$ there exists a open neighbourhood $V_z$ of $z = \phi_{h,k}(x)$ in $\R^k$ such that $\phi_{h,k}^{-1}$ extends to a map $\phi_{h,k}^{-1} : V_z \cap \phi_{h,k}(M) \to M$ which is a $C^r$ diffeomorphism onto its image. In particular the differential $D_{z} \phi_{h,k}^{-1} : T_z \phi_{h,k}(M) \to T_x M$ is a linear isomorphism. Its inverse equals $D_{x} \phi_{h,k} : T_x M \to T_z \phi_{h,k}(M)$ (note that indeed $D_{x} \phi_{h,k} : T_x M \to T_z \R^k$ is injective and its image equals $T_z \phi_{h,k}(M)$). We can therefore define the prediction map $S_{h,k} : \phi_{h,k}(X_h) \to \phi_{h,k}(X_h)$ as in \eqref{eq: prediction map} and it follows that $S_{h,k}$ is $C^r$ differentiable at every $z \in \phi_{h,k}(X_h)$ in the sense that there exist open neighbourhoods $V_z$ of $z$ and $V_{S_{h,k}(z)}$ of $S_{h,k}(z)$ in $\R^k$ such that $S_{h,k}$ extends to a $C^r$ diffeomorphism of $V_z \cap \phi_{h,k}(M)$ onto $V_{S_{h,k}(z)} \cap \phi_{h,k}(M)$. Again, the differential $D_z S_{h,k} : T_z  \phi_{h,k}(M) \to T_{S_{h,k}(z)} \phi_{h,k}(M)$ is a linear isomorphism. As $\phi_{h,k}(X_h)$ is $S_{h,k}$-invariant, we can take arbitrary iterates of $S_{h,k}$ and consider $ \frac{1}{n} \log \frac{\|D_{z}(S_{h,k}^n)v\|}{\|v\|}$ as the Lyapunov exponent observed in finite time. We can prove that these observed Lyapunov exponents approximate the Lyapunov exponents of the original system in the following sense.
	
	\begin{thm}\label{thm: lyap exp}
		Let $M$ be a compact, Riemannian $C^r$ manifold. Let $T : M \to M$ be a $C^r$ diffeomorphism and let $\mu$ be a $T$-invariant, ergodic Borel probability measure on $M$, having Lyapunov exponents $\chi_1, \ldots, \chi_\ell$ with multiplicities $d_1, \ldots, d_\ell$. Fix $k > \dim M$. Then for a prevalent $C^r$ observable $h : M \to \R$, for $\mu$-a.e. $x \in M$ there exists a decomposition $T_{\phi_{h,k}(x)} \phi_{h,k}(M) = \bigoplus \limits_{i=1}^\ell \tilde{H_i}(\phi_{h,k}(x))$ into linear subspaces $\tilde{H_i}(\phi_{h,k}(x))$ with $\dim \tilde{H_i}(\phi_{h,k}(x)) = d_i$ and such that the following property holds: for every $\eps > 0$
		
		\[ \lim \limits_{N \to \infty} \frac{1}{N} \# \left\{ 0 \leq n < N : \sup \limits_{v \in \tilde{H_i}(x) \setminus \{ 0 \}} \left| \frac{1}{n} \log \frac{\|D_{\phi_{h,k}(x)}(S_{h,k}^n)v\|}{\|v\|} - \chi_i \right| < \eps \text{ for all }\ 1 \leq i \leq \ell \right\} = 1,\]
		where $S_{h,k}$ is the prediction map defined as in \eqref{eq: prediction map}.
		
	\end{thm}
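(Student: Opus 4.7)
The plan is to combine Theorem \ref{thm: takens main}.\eqref{it: takens main dim}, the Oseledets multiplicative ergodic theorem, and a recurrence argument based on Birkhoff's ergodic theorem. The strategy is to transport the Oseledets splitting of $T_x M$ to $T_{\phi_{h,k}(x)} \phi_{h,k}(M)$ via the linear isomorphism $D_x \phi_{h,k}$, then use the chain rule to express $D_z S_{h,k}^n$ in terms of $D_x T^n$, and control the remaining boundary factors $D_x \phi_{h,k}$ and $D_{T^n x} \phi_{h,k}$.

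First, by Theorem \ref{thm: takens main}.\eqref{it: takens main dim} together with the set-up described in the paragraph preceding the theorem, for prevalent $h$ there is a $T$-invariant full-$\mu$-measure Borel set $X_h \subset M$ on which $\phi_{h,k}$ is injective, $\phi_{h,k}$ is locally a $C^r$ diffeomorphism, and $S_{h,k}$ is $C^r$ differentiable in the sense explained there. After intersecting $X_h$ with the full-measure set where the Oseledets decomposition \eqref{eq: lyap splitting}--\eqref{eq: lyap conv} holds, I define the candidate subspaces by $\tilde{H_i}(\phi_{h,k}(x)) := D_x \phi_{h,k}(H_i(x))$. Since $D_x \phi_{h,k} : T_x M \to T_{\phi_{h,k}(x)}\phi_{h,k}(M)$ is a linear isomorphism, this yields the required direct sum decomposition, with the correct dimensions $d_i$.

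Next, the chain rule applied to $S_{h,k}^n = \phi_{h,k} \circ T^n \circ \phi_{h,k}^{-1}$ (valid on suitable neighbourhoods) gives, for $z=\phi_{h,k}(x)$ and $v = D_x \phi_{h,k}(u)$ with $u \in H_i(x) \setminus \{0\}$,
\[ \frac{\|D_z S_{h,k}^n v\|}{\|v\|} = \frac{\|D_{T^n x}\phi_{h,k}(D_x T^n u)\|}{\|D_x \phi_{h,k}(u)\|}. \]
Using the uniform upper bound $\|D\phi_{h,k}\|_{op} \leq C$ on $M$ (from compactness and $C^r$ regularity), together with the pointwise lower bound $\|D_y \phi_{h,k}(w)\| \geq c(y)\|w\|$, where $c(y) := \sigma_{\min}(D_y \phi_{h,k})$ is strictly positive for $\mu$-a.e.\ $y$ (again by Theorem \ref{thm: takens main}.\eqref{it: takens main dim}), I would sandwich the above ratio between $c(T^n x)\|D_x T^n u\|/(C\|u\|)$ and $C\|D_x T^n u\|/(c(x)\|u\|)$. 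Taking $\tfrac{1}{n}\log$ and invoking the uniform convergence on $H_i(x) \setminus \{0\}$ in \eqref{eq: lyap conv} then reduces the problem to showing that $\tfrac{1}{n}\log c(T^n x) \to 0$ along a density-$1$ set of indices $n$.

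The main obstacle I anticipate is precisely this density-$1$ estimate, since $\log c$ need not lie in $L^1(\mu)$ and so one cannot directly apply the standard observation that a Birkhoff-integrable $f$ satisfies $f(T^n x)/n \to 0$ $\mu$-a.s. I would instead argue by truncation: given $\eps > 0$, since $c>0$ $\mu$-a.s., one can choose $\eta > 0$ so small that $\mu(A_\eta) < \eps$, where $A_\eta := \{y : c(y) < \eta\}$. Birkhoff's ergodic theorem applied to $\chi_{A_\eta}$ gives, for $\mu$-a.e.\ $x$, that the set $\{n : T^n x \in A_\eta\}$ has upper density at most $\eps$; on its complement one has $|\log c(T^n x)|/n \leq |\log\eta|/n$, which drops below $\eps$ once $n \geq |\log\eta|/\eps$. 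Combining this with the deterministic upper bound (whose error is $O(1/n)$) and the uniform Oseledets convergence, the set of ``good'' $n$ for which the desired inequality holds simultaneously for all $i \in \{1,\ldots,\ell\}$ has lower density at least $1 - \eps$. Letting $\eps \to 0$ along a sequence and applying a diagonal argument over the a.s. sets delivers the density-$1$ conclusion of the theorem.
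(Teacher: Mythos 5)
Your proposal is correct and follows essentially the same argument as the paper: push the Oseledets splitting forward via $D_x\phi_{h,k}$, apply the chain rule to isolate the single $n$-dependent boundary factor $D_{T^n x}\phi_{h,k}$, and control it by a truncation-plus-Birkhoff recurrence argument. The paper truncates both $\|D_x\phi_{h,k}\|$ and $\|D_{\phi_{h,k}(x)}\phi_{h,k}^{-1}\|$ via the sets $E_M$ and lets $M\to\infty$, while you use the global upper bound on $\|D\phi_{h,k}\|$ from compactness and truncate only $\sigma_{\min}(D_y\phi_{h,k})$ via $A_\eta$; this is the same idea with different bookkeeping, and once the theorem's tolerance $\eps$ is decoupled from the truncation parameter (which your final diagonal step does), it yields density $1$ exactly as in the paper.
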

	
	\begin{proof}
	Take a prevalent $C^r$ observable $h : M \to \R$ such that point \eqref{it: takens main dim} of Theorem \ref{thm: takens main} holds. Let $X_h \subset M$ be a full $\mu$-measure and $T$-invariant set as described in the previous paragraph and such that \eqref{eq: lyap splitting} and \eqref{eq: lyap conv} hold for every $x \in X_h$. We have then that
	\begin{equation}\label{eq: S equivariance}
	S^n_{h,k} = \phi_{h,k} \circ T^n \circ \phi_{h,k}^{-1} \text{ on } \phi_{h,k}(X_h).
	\end{equation}
	Set $\tilde{H_i}(\phi_{h,k}(x)) := D_x{\phi_{h,k}}(H_i(x))$. As $D_x \phi_{h,k} : T_{x}M \to T_{\phi_{h,k}}\phi_{h,k}(M)$ is a linear isomorphism, we obtain from \eqref{eq: lyap splitting} that $T_{\phi_{h,k}(x)} \phi_{h,k}(M) = \bigoplus \limits_{i=1}^\ell \tilde{H_i}(\phi_{h,k}(x))$ and $\dim \tilde{H_i} = d_i$. For $M > 0$ set
	\[ E_M = \left\{ x \in X_h : \|D_x \phi_{h,k}\| \leq M,\ \|D_{\phi_{h,k}(x)} \phi_{h,k}^{-1}\| \leq M \right\}, \]
	where the operator norm is induced by the scalar products on $T_x M$ and $T_{\phi_{h,k}(x)}\phi_{h,k}(M)$ coming form the respective Riemannian metrics. Note that
	\begin{equation}\label{eq: E measure}
		\lim \limits_{M \to \infty} \mu(E_M) = \mu(X_h) = 1.
	\end{equation}
	By the Birkhoff ergodic theorem, for $\mu$-a.e. $x \in X_h$
	\begin{equation}\label{eq: E birkhoff} \lim \limits_{N \to \infty } \frac{1}{N} \# \left\{ 0 \leq n < N : T^n x \in E_M\right\} = \mu(E_M).
	\end{equation}
	Take $x \in X_h$ and $n \geq 1$ such that $T^n x \in E_M$. Then by \eqref{eq: S equivariance} and the chain rule, for $v \in T_{\phi_{h,k}(x)} \phi_{h,k}(M)$
	\[
		\|D_{\phi_{h,k}(x)}(S_{h,k}^n)v\| = \|D_{\phi_{h,k}(x)}(\phi_{h,k} \circ T^n \circ \phi_{h,k}^{-1})v\| = \|\left( D_{T^n x}\phi_{h,k} \circ D_x T^n \circ D_{\phi_{h,k}(x)}\phi_{h,k}^{-1} \right) v \| \]
	and hence
	\[ \frac{1}{M} \| D_x T^n  \left( D_{\phi_{h,k}(x)}\phi_{h,k}^{-1} (v) \right) \| \leq \|D_{\phi_{h,k}(x)}(S_{h,k}^n)v\| \leq M \| D_x T^n \left( D_{\phi_{h,k}(x)}\phi_{h,k}^{-1} (v) \right) \|. \]
	Therefore, for $v \in T_{\phi_{h,k}(x)} \phi_{h,k}(M) \setminus \{ 0 \}$
	\[ \left| \frac{1}{n} \log \frac{\|D_{\phi_{h,k}(x)}(S_{h,k}^n)v\|}{\|v\|} - \frac{1}{n} \log \frac{\|D_x T^n \left( D_{\phi_{h,k}(x)}\phi_{h,k}^{-1} (v)\right)\|}{\|v\|} \right| \leq \frac{\log M}{n}.\]
	As for $v \in \tilde{H_i}(\phi_{h,k}(x))$ we have $D_{\phi_{h,k}(x)}\phi_{h,k}^{-1} (v) \in H_i(x)$ (since $D_{\phi_{h,k}(x)}\phi_{h,k}^{-1}$ is the inverse of $D_x \phi_{h,k} : T_x M \to T_{\phi_{h,k}(x)}\phi_{h,k}(M)$), we see that if $T^n x \in E_M$, then
	\[ \sup \limits_{v \in \tilde{H_i}(x) \setminus \{ 0 \}} \left| \frac{1}{n} \log \frac{\|D_{\phi_{h,k}(x)}(S_{h,k}^n)v\|}{\|v\|} - \chi_i \right| \leq
	\sup \limits_{v \in H_i(x) \setminus \{ 0 \}} \left| \frac{1}{n} \log \frac{\|D_{x}(T^n) v\|}{\|v\|} - \chi_i \right| + \frac{\log M}{n}. \]
	Since we assume that \eqref{eq: lyap conv} holds for every $x \in X_h$, we see that for fixed $\eps > 0$ and any $M > 0$, there exists $n_0 = n_0(x,M,\eps)$ such that
	\[ \sup \limits_{v \in \tilde{H_i}(x) \setminus \{ 0 \}} \left| \frac{1}{n} \log \frac{\|D_{\phi_{h,k}(x)}(S_{h,k}^n)v\|}{\|v\|} - \chi_i \right| < \eps \text{ if } n \geq n_0 \text{ and } T^n x \in E_M. \]
	Therefore, for such $x \in X_h$ and every $M > 0$
	\[\begin{split}
		\lim \limits_{N \to \infty} & \ \frac{1}{N} \# \left\{ 0 \leq n < N : \sup \limits_{v \in \tilde{H_i}(x) \setminus \{ 0 \}} \left| \frac{1}{n} \log \frac{\|D_{\phi_{h,k}(x)}(S_{h,k}^n)v\|}{\|v\|} - \chi_i \right| < \eps \text{ for all }\ 1 \leq i \leq \ell \right\} \\
		&  \geq  \lim \limits_{N \to \infty } \frac{1}{N} \left\{ 0 \leq n < N : T^n x \in E_M\right\}.
	\end{split}\]
	Combining this with \eqref{eq: E birkhoff} and \eqref{eq: E measure} finishes the proof.
	\end{proof}
		
		\bibliographystyle{alpha}
		\bibliography{universal_bib}
		
	\end{document}